\documentclass[12pt,a4paper,english,reqno]{amsart}
\usepackage[a4paper, margin= 1in]{geometry}
\usepackage{amsmath}
\usepackage{amssymb}
\usepackage{amsthm}
\usepackage{graphicx}
\usepackage{gensymb}
\usepackage{matlab-prettifier}
\usepackage[english]{babel}
\usepackage{tikz}
\usepackage{cite}
\usepackage{hyperref}
\usepackage{float}
\usepackage[utf8]{inputenc} 
\usepackage{dsfont}

\newcommand{\norm}[1]{\left \lVert #1 \right \rVert}

\newenvironment{proofsketch}[1][Sketch of Proof]{%
  \begin{proof}[#1]%
}{%
  \end{proof}%
}

\newtheorem{theorem}{Theorem}[section]
\newtheorem*{theorem*}{Theorem}
\newtheorem{lemma}[theorem]{Lemma}
\newtheorem{corollary}[theorem]{Corollary}
\newtheorem{question}[theorem]{Question}

\newtheorem{definition}[theorem]{Definition}
\theoremstyle{remark}
\newtheorem*{remark}{Remark}
\title{A Generalisation of the Concentration-of-Measure Phenomenon with Applications to Intersection Problems}
\author{Benjamin Gillott*}
\begin{document}
\maketitle
\begingroup
\renewcommand{\thefootnote}{\fnsymbol{footnote}}
\footnotetext[1]{Trinity College, University of Cambridge, United Kingdom. Email: \href{mailto:bg449@cam.ac.uk}{bg449@cam.ac.uk}.}
\endgroup
\setcounter{section}{-1}

\begin{abstract}
In this paper we prove a generalisation of the concentration-of-measure phenomenon in the discrete cube. In this setting, the concentration-of-measure phenomenon states that for every subset $\mathcal{A}$ of the discrete cube, its sum with a Hamming ball of suitably large radius $r$ — or equivalently, its $r$-expansion — results in a substantial increase in measure. We define a notion of `$(\gamma,C)$-well-spread' for subsets of the discrete cube $\{0,1\}^n$ for which the following holds: for all $\epsilon$, there exist constants $\gamma$ and $C$ such that for every $\mathcal{A}$ with $|\mathcal{A}| \geq \epsilon2^n$ and every $(\gamma,C)$-well-spread $S$, $|\mathcal{A} + S|$ is at least $(1-\epsilon)2^n$.
\\
\\
We use this result to prove new non-trivial upper bounds to two intersection problems: how many subsets (or subgraphs) can one take from $[n]$ or $[\binom{n}{2}]$ such that every pair's intersection contains some given substructure? We prove non-trivial upper bounds for the $C_4$-intersection problem and the $4$-AP-intersection problem. We also give upper bounds that tend to $0$ for the $H$-intersection problem and $k$-AP-intersection problem as the number of edges and $k$ tend to infinity. Previously, non-trivial upper bounds were only known for non-bipartite $H$ and nothing was known for the $k$-AP-intersection problem.
\end{abstract}

\section{Introduction}
Within extremal combinatorics, intersection problems are a classical area of study, dating back to 1938 when Erdős, Ko and Rado proved the now famous Erdős-Ko-Rado Theorem \cite{EKR}:
\begin{theorem}[Erdős, Ko and Rado]
Let $k$ and $n$ be such that $k \leq n/2$. If $\mathcal{A} \subset [n]^{(k)}$ is a family of sets such that every pair of sets in $\mathcal{A}$ have a non-empty intersection, then:
$$
|\mathcal{A}| \leq \binom{n-1}{k-1}
$$
\end{theorem}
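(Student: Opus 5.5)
The plan is to use Katona's cyclic permutation argument together with double counting. Fix a cyclic ordering $\sigma$ of $[n]$, that is, a bijection of $[n]$ with the vertices of an $n$-cycle. Call a $k$-set an \emph{arc} of $\sigma$ if its elements are consecutive in this ordering. Each cyclic ordering has exactly $n$ arcs of size $k$.

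The key step is Katona's lemma: if $k \leq n/2$, an intersecting family contains at most $k$ arcs of any fixed cyclic ordering. To prove it, suppose $A$ is an arc in $\mathcal{A}$, occupying positions $1,\dots,k$. Any other arc meeting $A$ must start or end strictly inside $A$. Such an arc either begins at position $i+1$ or ends at position $i$, for some $1 \leq i \leq k-1$. For each $i$, call these two arcs the pair associated with the cut between positions $i$ and $i+1$. The two arcs of a pair are disjoint: the arc ending at $i$ occupies positions $i-k+1,\dots,i$ and the arc starting at $i+1$ occupies $i+1,\dots,i+k$. Together they cover $2k \leq n$ distinct positions, so they do not overlap even cyclically. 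This is precisely where the hypothesis $k\le n/2$ enters, and it is the main point needing care. Hence $\mathcal{A}$ contains at most one arc from each of the $k-1$ pairs, and so at most $1+(k-1)=k$ arcs in total.

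Next comes the double counting. Count pairs $(\sigma, A)$ with $A\in\mathcal{A}$ an arc of $\sigma$, where $\sigma$ ranges over all $(n-1)!$ cyclic orderings. By the lemma there are at most $k\,(n-1)!$ such pairs. On the other hand, a fixed $k$-set is an arc in exactly $k!\,(n-k)!$ cyclic orderings. To see this, contract the set to a single block: the block and the remaining $n-k$ elements can be arranged cyclically in $(n-k)!$ ways, and the block can be ordered internally in $k!$ ways. Therefore
$$|\mathcal{A}|\,k!\,(n-k)! \le k\,(n-1)!,$$
which gives $|\mathcal{A}| \le \frac{k\,(n-1)!}{k!\,(n-k)!} = \binom{n-1}{k-1}$.

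The only delicate points are the disjointness claim in the lemma, which needs $2k\le n$, and getting the count of cyclic orderings right. For the latter, each cyclic ordering is treated as an unoriented-start but oriented cycle, giving $(n-1)!$ of them. Any consistent convention works, since the same normalisation appears on both sides of the count.
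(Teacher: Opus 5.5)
Your proof is correct and complete. It is Katona's circle argument, and both points you flag are handled properly. First, the two arcs attached to the cut between positions $i$ and $i+1$ span $2k\le n$ consecutive positions, so they are disjoint. Second, a fixed $k$-set is an arc of exactly $k!\,(n-k)!$ of the $(n-1)!$ oriented cyclic orders. Together these give $|\mathcal{A}|\le k\,(n-1)!/(k!\,(n-k)!)=\binom{n-1}{k-1}$. You also do not need the $2(k-1)$ arcs in the pairs to be distinct from one another: every arc of $\mathcal{A}$ other than $A$ lies in some pair, and each pair contributes at most one, so the bound $1+(k-1)$ holds regardless.

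The paper does not prove this theorem at all; it is quoted only as historical background, so there is no proof of it to compare against. The nearest argument in the paper is its sketch of Katona's $t$-intersection theorem in the whole cube. There, $\mathcal{A}^c+B(t-1)$ is disjoint from $\mathcal{A}$, and Harper's inequality gives a lower bound on the size of this expansion. The uniform-layer analogue of that template is Daykin's derivation of this theorem from Kruskal--Katona. In that argument, the $k$-shadow of $\{[n]\setminus A : A\in\mathcal{A}\}$ consists of $k$-sets disjoint from members of $\mathcal{A}$, so it misses $\mathcal{A}$, and the isoperimetric bound then forces $|\mathcal{A}|\le\binom{n-1}{k-1}$. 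It is this expansion-versus-disjointness template that the paper generalises, replacing the Hamming ball with well-spread sets. Your cycle argument is more elementary and self-contained, since it needs no isoperimetric input. However, it is tailored to the exact extremal problem and does not yield the kind of robust expansion statement the paper's method relies on.
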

Since then, many interesting problems have been posed and answered with various techniques. See Ellis \cite{EllisOverview} for an overview of the subject.
\\
\\
For a family of sets $\mathcal{A} \subset \{0,1\}^n$ let $\mu(\mathcal{A}) = |\mathcal{A}|/2^n$. The intersection problems we are concerned with can be posed in a general framework as follows: given a class of sets $\mathcal{C} \subset \{0,1\}^n$, what is the largest family of sets $\mathcal{A}$ such that every pair of sets $A$ and $B$ in $\mathcal{A}$ intersect in an element of $\mathcal{C}$? In the case that $\mathcal{C}$ does not contain the empty set we have the upper bound of $\mu(\mathcal{A}) \leq 1/2$ which follows from the simple fact that we cannot take both a set and its complement. We will refer to this upper bound as the trivial upper bound. In this paper, $\mathcal{A}$ is always used to denote the family of sets that intersect in elements of $\mathcal{C}$, for whichever $\mathcal{C}$ we are currently concerned with.
\\
\\
Simonovits and Sós posed the following pair of problems in 1976, which are about what happens when the ground set has additional structure. They asked about the maximal size of a family of subsets of $[n]$ such that every pair of sets in the family intersects in a set containing a $3$-AP. They also asked for the maximal size of a family of labelled graphs on $n$ vertices such that every pair of graphs in the family intersects in a graph containing a triangle (these families are referred to as triangle-intersecting). For both of these questions Simonovits and Sós conjectured that the optimal construction was to take a specific $3$-AP or triangle and take all sets that contain this triple, which would imply that $\mu({\mathcal{A}}) \leq 1/8$. This lower-bound construction, where we specify a fixed set and take all supersets, is referred to as the Erdős-Ko-Rado construction, and is in many cases conjectured to be optimal. The problem for $3$-APs remains completely open: neither of the trivial bounds has been improved on. However, in a significant breakthrough in 2012, Ellis, Friedgut and Filmus \cite{T-intersecting} resolved the triangle-intersecting conjecture in the affirmative.
\\
\\
Generalisations of the triangle-intersecting problem have also been considered. For a fixed graph $H$, we define $m(H) = \sup\{\mu{(\mathcal{A}}) \mid \mathcal{A}\text{ is } H \text{-intersecting}\}$. Alon and Spencer \cite{Alon} asked for which graphs is it the case that $m(H) = 1/2$. Alon remarked that this holds for $H$ a disjoint collection of stars. (To see this, one takes a disjoint collection of $k$ stars each with $m$ edges and takes the set of all graphs with at least $m/2 + C$ edges on at least $k/2 + C$ of these stars. For every fixed $C$, with $k = o(m)$ and $k$ tending to infinity, we have that $\mu(\mathcal{A}) = 1/2 - o(1)$.) Alon conjectured that $m(P_3) < 1/2$. If this were true it would completely determine which graphs $H$ have $m(H) = 1/2$. There has also been interesting work by Berger and Zhao \cite{Zhao} extending the result of Ellis, Friedgut and Filmus to $K_4$-intersecting families.
\\
\\
Chung, Graham, Frankl and Shearer \cite{Shearer} showed that $m(H) \leq 1/4$ for every $H$ that is not bipartite. Our contribution is to prove a non-trivial upper bound for $m(C_4)$. Previously, no upper bound better than the trivial upper bound was known for any bipartite graph. We actually prove a stronger statement. In Theorem 3.2 we show that there exists a positive $\epsilon$ such that every cycle-intersecting family of graphs has measure at most $\frac{1}{2}-\epsilon$. We also show in Theorem 2.2 that $m(K_{3,t})$ is at most $t^{-\frac{1}{6}}$ for $t$ sufficiently large. We mention here some improvements to lower bounds for $m(H)$. Christofides \cite{Christofides} showed that $m(P_3) \geq 17/128$ — the first non-trivial lower bound proved for any $H$. More recently Balogh and Linz improved on the Erdős-Ko-Rado construction for a wider class of bipartite graphs \cite{Balogh}.
\\
\\
We also prove upper bounds for the $k$-AP-intersection problem. We show in Theorem 3.3 that there exists $\epsilon > 0$ such that every $4$-AP-intersecting family has measure at most $\frac{1}{2}-\epsilon$. Previously, nothing non-trivial was known for any $k$. In Theorem 2.1 we show that if $\mathcal{A}$ is $k$-AP-intersecting then $\mu(\mathcal{A}) \leq k^{-\frac{1}{3}}$ for $k$ sufficiently large.
\\
\\
Our proofs rest on a generalisation of the concentration-of-measure phenomenon in the discrete cube — in what follows we will motivate our generalisation in the context of intersection problems. For two sets $A$ and $B$, $A+B$ will denote the sum viewing them as vectors over $\mathbb{F}_2$. For set systems $\mathcal{A}$ and $\mathcal{B}$, we write $\mathcal{A} + \mathcal{B}$ for the family $\{A+B \mid A \in \mathcal{A}, B \in \mathcal{B}\}$. The concentration-of-measure phenomenon states that for all $\epsilon>0$, there exists a $C$ such that for every $\mathcal{A}$ with measure at least $\epsilon$, $\mathcal{A}+B(C\sqrt{n})$ (where $B(r)$ is the Hamming ball of radius $r$) has measure at least $1-\epsilon$. The relation between intersection problems and concentration-of-measure phenomena is classical. In 1964 Katona \cite{Katona} answered a question of Erdős, Ko and Rado about the maximal family of subsets of $[n]$ such that every pair intersects in at least $t$ elements. We present his argument (for $n$ and $t$ of the same parity) here as it serves as motivation for our proofs in this paper. 

\begin{theorem}[Katona]
Let $n+t$ be even. Let $\mathcal{A} \subset \mathcal{P}([n])$ be a set-system such that for every $A$ and $B$ in $\mathcal{A}$ we have $|A \cap B| \geq t$. Then, $$|\mathcal{A}| \leq \sum_{i=0}^{\frac{n-t}{2}} \binom{n}{i}.$$
\end{theorem}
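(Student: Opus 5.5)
The plan is to translate the intersection condition into a condition on symmetric differences and then apply Kleitman's diameter theorem. Since the paper frames this argument as motivation for concentration of measure, I will present it through the lens of sumsets $\mathcal{A}+\mathcal{A}$.

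\textbf{Step 1: bound all symmetric differences.} For $A,B\in\mathcal{A}$ we have $|A\cup B| \le n$, so
$$|A+B| = |A\cup B| - |A\cap B| \le n - t.$$
Hence $\mathcal{A}+\mathcal{A}\subset B(n-t)$. In other words, $\mathcal{A}$ has diameter at most $d:=n-t$ in the Hamming metric. Note that $d$ is even, because $n+t$ is even.

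\textbf{Step 2: bound the size of a set of small diameter.} Here I invoke Kleitman's diameter theorem: a subset of $\{0,1\}^n$ of diameter $2r$ has size at most $|B(r)| = \sum_{i\le r}\binom{n}{i}$. With $r=(n-t)/2$ this gives exactly the claimed bound. The Hamming ball of radius $r$ centred at $[n]$ attains it. That ball consists of all sets of size at least $(n+t)/2$, and any two such sets meet in at least $t$ elements, so the bound is tight.

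\textbf{The main obstacle: proving Kleitman's theorem.} If I cannot simply cite it, I would prove it by down-compressions $C_i$, which replace $A$ by $A\setminus\{i\}$ when the result is not already in the family.

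First, I would check that compressions do not increase the diameter. This is a case check on a pair of sets $A,B$ and a coordinate $i$. The only delicate case is when one set is compressed and the other is blocked. The blocking set then provides a witness showing that the new distance is already realised by some pair in the original family.

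After repeated compressions the family becomes a down-set. For a down-set of diameter $2r$, take $A,B$ in the family. Then $A\cup B\setminus(A\cap B)$ has size at most $2r$. Moreover, because the family is a down-set, $A\setminus B$ and $B\setminus A$ both lie in it, and these are disjoint.

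The cleanest finish is to show directly that the family is an intersecting-type structure whose unions have size at most $2r$. From there I would run induction on $n$: split the family by whether $n$ lies in the set, apply induction to the two halves with diameters $2r$ and $2r-2$ (after a suitable shift), and use Pascal's identity
$$|B_n(r)| = |B_{n-1}(r)| + |B_{n-1}(r-1)|.$$
The delicate point is justifying that the section containing $n$ has diameter at most $2r-2$ after a re-pairing. I would follow Kleitman's original trick of pairing each set with its $n$-flipped version and using the down-set property to handle this.

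Alternatively, since the paper only uses this as motivation, citing Kleitman suffices, and the proof reduces to Step 1.
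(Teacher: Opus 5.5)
Your proof is correct once Kleitman's diameter theorem is cited. State that theorem with the hypothesis $2r<n$, since the whole cube has diameter $n$. Here $2r<n$ means $t\ge 1$, which the statement needs anyway (it fails for $t=0$, $n=2$), and which the paper's argument also uses through $B(t-1)\neq\emptyset$.

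Your route is essentially the paper's proof with the black box drawn in a different place. Since $|A+B^c|=n-|A+B|$, your Step 1 ($\mathcal{A}+\mathcal{A}\subset B(n-t)$) is equivalent to the paper's observation that $\mathcal{A}$ is disjoint from $\mathcal{A}^c+B(t-1)$. Where you invoke Kleitman, the paper invokes Harper's vertex-isoperimetric inequality. If $|\mathcal{A}|>|B(r)|$ with $r=\frac{n-t}{2}$, Harper gives $|\mathcal{A}^c+B(t-1)|>|B(\frac{n+t}{2}-1)|=2^n-|B(r)|$, contradicting $|\mathcal{A}^c+B(t-1)|\le 2^n-|\mathcal{A}|$. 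This complementation argument is exactly how the even-diameter case of Kleitman's theorem is deduced from Harper, so the two proofs differ only in packaging.

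Each packaging has an advantage. Yours is shorter, and the odd-diameter case of Kleitman (bound $2\sum_{i\le r}\binom{n-1}{i}$ for diameter $2r+1<n$) gives Katona's bound for odd $n+t$ with no extra work. The paper's is phrased as an expansion statement about $\mathcal{A}^c+S$ with $S=B(t-1)$. That is precisely the form Corollary 1.3 generalises when Section 2 replaces $B(t-1)$ by a well-spread $\mathcal{C}^c$, which is why the paper presents it this way.

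Your fallback sketch of Kleitman's theorem would not work as written. Compressing to a down-set $\mathcal{D}$ is fine. However, for a down-set, diameter at most $2r$ is equivalent to $|A\cup B|\le 2r$ for all $A,B\in\mathcal{D}$ (compare $A\setminus B$ with $B$). After complementing, this says $\{D^c : D\in\mathcal{D}\}$ is an $(n-2r)$-intersecting up-set, so this step only returns you to Katona's theorem for up-sets.

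The induction then breaks exactly where you flag it. The section $\mathcal{D}_1=\{A\setminus\{n\}: n\in A\in\mathcal{D}\}$ only inherits the bound $2r-1$ on unions, and it can genuinely exceed $|B_{n-1}(r-1)|$. For example, take $r=1$, $n\ge 3$ and $\mathcal{D}=\{\emptyset,\{1\},\{n\},\{1,n\}\}$. Then $\mathcal{D}_1=\{\emptyset,\{1\}\}$ has diameter $1$ and size $2>1$. So the two sections cannot be bounded separately by $|B_{n-1}(r)|$ and $|B_{n-1}(r-1)|$. The slack must come from $\mathcal{D}_0$ (here of size $2$), and the unspecified ``re-pairing'' is where all the difficulty lies. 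Either rely on the citation, or get a self-contained proof from Harper as the paper does.
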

\begin{proofsketch}
We have that for every $x \in \mathcal{A}$ and $y \in B(t-1)$, $x^c + y \notin \mathcal{A}$. Since otherwise $x \cap (x^c + y) = y \cap x \subset y$ and therefore has size less than $t$. Let $\mathcal{A}^c$ denote $\{x^c \mid x \in \mathcal{A}\}$. It holds that $(\mathcal{A}^c + B(t-1)) \cap \mathcal{A} = \emptyset$. This implies that $|\mathcal{A}^c + B(t-1)| \leq 2^n - |\mathcal{A}|$. Harper's inequality \cite{Harper} then gives us the desired bound.
\end{proofsketch}

Note that for $t \sim \sqrt{n}$, we obtain a constant improvement to the upper bound for $\mu(\mathcal{A})$, an illustration of the concentration-of-measure phenomenon. For the sake of clarity, we will focus on the $k$-AP-intersection problem. However, all that follows also applies to the bipartite-graph question. If we try to follow Katona's argument we would need the addition of the $k$-AP-free sets to a family $\mathcal{A}$, with $\mu(\mathcal{A}) \approx 1/2$, to increase the measure by an amount independent of $\mathcal{A}$ and $n$. We notice that a $1-o(1)$ proportion of the sets of size $\sqrt{n}$ do not have a $k$-AP for any $k \geq 5$. This motivates asking how we can extend the concentration-of-measure phenomenon when instead of adding the full Hamming ball we add only a subset with certain properties. 
\\
\\
It is straightforward to observe that simply asking for a $1-o(1)$ proportion of the sets of size $\sim\sqrt{n}$ is not sufficient: if one takes both $\mathcal{A}$ and $S$ to be the collection of sets without some fixed element (this is a rather degenerate example), then $\mu({\mathcal{A}}) = 1/2$ and $S$ contains all bar a $\sim 1/\sqrt{n}$ proportion of the sets of size $\sim\sqrt{n}$. However, $\mathcal{A} + S = \mathcal{A}$ so we do not have a concentration-of-measure phenomenon for this $S$.
\\
\\
In the following section we define a notion of a `$(\gamma,C)$-well-spread' $S$ that roughly states that regardless of how we weight the importance of the coordinates, $S$ contains most of the analogue of $B(\sqrt{n})$ with respect to that weighting. For example, it will imply that for every subset $J \subset [n]$ of size $k$, we have most subsets of $J$ with size at most $\sqrt{k}$. We then show in Corollary 1.3 that we obtain the concentration-of-measure phenomenon for every $(\gamma,C)$-well-spread $S$ (where $\gamma$ and $C$ will depend on $\epsilon$). We deduce this result as an easy consequence of our main result, Theorem 1.1, which in some sense is a probabilistic generalisation of the concentration-of-measure phenomenon.
\\
\\
It is then quite straightforward to obtain non-trivial bounds for the intersection problems for sufficiently large bipartite graphs and long APs — all one must do is check that $H$-free sets and $k$-AP-free sets are sufficiently well spread. We do this in Section 2.
\\
\\
In Section 3 we prove a specialised version of Theorem 1.1 tailored to the cycle and $4$-AP intersecting problem and use it to deduce non-trivial upper bounds for those problems. In some sense these are stronger results than those in Section 2, at least from the perspective of Alon's question concerning which graphs have a non-trivial upper bound. We have chosen to present our `weaker' results on the graph-intersection problem first since we wish to highlight Theorem 1.1. The concentration-of-measure result we use in Section 3 is narrower in scope and more technical to prove, so it might obscure some of the ideas if it were presented first.

\section{The Main Result}
Before stating our main result, we must first make some simple definitions. Throughout this paper, we typically identify a set with its indicator function from $[n]$ to $\{0,1\}$. For $P \in [0, 1]^n$ we take $Y = Y_{P}$ to be the random vector in $\{0,1\}^n$ such that each $Y_i$ is a Bernoulli random variable with $\mathbb{P}(Y_i = 1) = P_i$ and the $Y_i$ are independent from each other. We also let $B(x)$ denote the set of all $y \in \{0,1\}^n$ such that $y_i \leq x_i$ for all $i$. 
\begin{theorem}
For all $\epsilon, \delta > 0$ and every positive integer $n$, given any $\mathcal{A} \subset \{0,1\}^n$, for at least a $1-\epsilon$ proportion of $x \in \{0,1\}^n$ the following holds:
\\
There exists a $P \in [0,1]^n$ such that $\norm{P}_2 ^2 \leq \frac{\log(\frac{2}{\epsilon})}{2\delta\epsilon}$ satisfying $\mathbb{P}(x+Y_P \in \mathcal{A}) \geq \mu(\mathcal{A}) -\delta$.

\end{theorem}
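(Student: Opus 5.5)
The plan is to make $P$ depend on $x$ through a Doob martingale that reveals the coordinates of $x$ one at a time. Write $f = \mathds{1}_{\mathcal{A}}$ and let $x$ be uniform on $\{0,1\}^n$. Set $Z_k = \mathbb{E}[f(x) \mid x_1,\dots,x_k]$. Then $Z_0 = \mu(\mathcal{A})$, $Z_n = f(x)$, and $(Z_k)$ is a martingale with values in $[0,1]$. Its increments $D_k = Z_k - Z_{k-1}$ satisfy
$$\mathbb{E}\sum_k D_k^2 = \operatorname{Var}(f) \le \tfrac14 .$$
By Markov, for all but an $\epsilon/2$ proportion of $x$ the quadratic variation $\sum_k D_k(x)^2$ is at most $\frac{1}{2\epsilon}$. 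This is the only place $\epsilon$ enters, which matches the $\frac{1}{\epsilon}$ in the bound. The remaining factors $\frac{\log(2/\epsilon)}{\delta}$ should come from a Hoeffding-type tail estimate, so the aim is to produce $P$ with $\norm{P}_2^2 \lesssim \frac{\log(2/\epsilon)}{\delta}\sum_k D_k^2$.

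The key identity is the following. Flipping coordinate $i$ of $x$ with probability $\tfrac12$, independently for all $i>k$, gives $\mathbb{E}_Y f(x+Y) = Z_k(x)$. So full randomisation reproduces the martingale but has huge $\ell_2$ norm. The plan is to randomise only where the martingale actually moves. Take $P_i = \min\{1, \lambda |D_i(x)|\}$, possibly restricted to a suitable range of $i$ (for example up to a stopping time). Then compare $\mathbb{E} f(x+Y_P)$ with $\mu(\mathcal{A})$ by a hybrid or coupling argument. Along the path, coordinates with tiny $|D_i|$ change the conditional expectation by little and can safely be left fixed. Coordinates with larger $|D_i|$ get flipped with substantial probability.

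I would control the total error by a Hoeffding/Azuma bound. The downward drift of $f$ along the unrandomised coordinates is a martingale with square function at most $\sum D_i^2 \le \frac{1}{2\epsilon}$. Hence it exceeds $\delta$ only with probability about $\exp\bigl(-\delta^2/(2\sum D_i^2)\bigr)$, and choosing $\lambda^2 \approx \frac{\log(2/\epsilon)}{\delta}$ should make this at most $\epsilon/2$. Together with the Markov step, this gives a $1-\epsilon$ proportion of good $x$ and the norm bound $\frac{\log(2/\epsilon)}{2\delta\epsilon}$.

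The main obstacle is the middle step: showing rigorously that flipping coordinate $i$ with probability only $P_i$, rather than $\tfrac12$, still keeps $\mathbb{E} f(x+Y_P)$ above $\mu-\delta$. The difficulty is that the increments $D_i$ are defined along one ordering and one specific path $x$. Once other coordinates are randomised, the conditional expectations seen by later coordinates differ from those along the path of $x$. The first thing I would try is an alternative formulation. Run the martingale with the randomisation built in: reveal $x_i$, and with probability $P_i$ (chosen predictably from the past) replace it by a fresh fair bit. Then the resulting process is exactly $\mathbb{E} f(x+Y_P)$ at the end. Choosing $P_i$ predictably to cancel large downward steps, via a stopping-time or greedy rule, should make the quadratic-variation and Hoeffding bookkeeping go through.
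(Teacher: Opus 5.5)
You have the right skeleton: reveal the bits of $x$ one at a time, build the noise into the conditional-expectation process, and get the $1/\epsilon$ from Markov and the $\log(2/\epsilon)/\delta$ from an exponential tail. But the step you flag as the main obstacle is where all the work lies, and neither version of your rule works.

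The rule $P_i=\min\{1,\lambda|D_i(x)|\}$ ignores the sign of $D_i$, so it also flips bits that are already good. For $\mathcal{A}=\{x: x_1=1\}$ and $\lambda\ge 2$ it gives $P_1=1$, hence $\mathbb{P}(x+Y_P\in\mathcal{A})=0$ for every $x\in\mathcal{A}$.

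In your fallback, suppose $P_i$ is genuinely predictable, i.e.\ a function of $x_1,\dots,x_{i-1}$. Then $x_i+(Y_P)_i$ is again a fair bit independent of the past. So the noised process is a martingale started at $\mu(\mathcal{A})$, with no upward drift to offset its fluctuations.

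The missing idea is that $P_{k+1}$ must depend on $x_{k+1}$ itself. The paper does this as follows:
\begin{itemize}
\item $W_k$ is the conditional expectation of $\mathbb{P}(X+Y_Q\in\mathcal{A})$ given $x_1,\dots,x_k$, with the noise $Q=(P_1,\dots,P_k,0,\dots,0)$ already inserted.
\item $O_{k+1}$ is the size of the step $W$ would take with no noise at coordinate $k+1$, and $Z_{k+1}$ is the better value of that bit.
\item $P_{k+1}=0$ if $x_{k+1}=Z_{k+1}$, and $P_{k+1}=\min(CO_{k+1},1)$ otherwise.
\end{itemize}
Then $W_k$ is a submartingale with drift $\mathbb{E}[2P_{k+1}O_{k+1}]\ge \frac{2}{C}\mathbb{E}[P_{k+1}^2]$.

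Your bookkeeping would not close either. The Doob martingale of $f$ along the un-noised path is not the process that determines $\mathbb{P}(x+Y_P\in\mathcal{A})$, so bounding its square function by $\operatorname{Var}(f)$ controls nothing relevant. Even granting it, Hoeffding with square function up to $\frac{1}{2\epsilon}$ gives a tail of order $\exp(-\epsilon\delta^2)$. This is close to $1$ and does not involve $\lambda$, so no choice of $\lambda$ brings it down to $\epsilon/2$.

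The quadratic variation really is of constant order. For majority, the first $n/2$ Doob increments are all $O(n^{-1/2})$, yet together they move the conditional expectation by a constant amount with constant probability. So small-increment coordinates cannot be left fixed, and downward fluctuations must be beaten by a drift proportional to the quadratic variation, not by concentration.

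Concretely, the paper proves two estimates.
\begin{itemize}
\item Since $W_k\in[0,1]$, the total expected drift is at most $1$. Combined with $P_i^2\le CP_iO_i$, this gives $\mathbb{E}\norm{P}_2^2\le C/2$, and the norm bound follows by Markov.
\item $e^{-2C(W_k-\mu(\mathcal{A}))}$ is a supermartingale. When $CO_{k+1}\le 1$ its one-step factor is $\frac12(e^{-x}+e^{x-x^2})$ with $x=2CO_{k+1}$, which is at most $1$ for $x\ge 0$. When the cap $P_{k+1}=1$ applies, the factor is $e^{-x}$. Hence $\mathbb{P}(W_n<\mu(\mathcal{A})-\delta)\le e^{-2C\delta}=\epsilon/2$ for $C=\frac{\log(2/\epsilon)}{2\delta}$.
\end{itemize}
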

Theorem 1.1 is roughly stating that for almost all $x$, we can find a small ball around $x$ such that the density of $\mathcal{A}$ in this ball is roughly the same \footnote{Theorem 1.1 only gives a lower bound on the density of $\mathcal{A}$ in this ball. By considering Theorem 1.1 applied for $\mathcal{A}$ and $\mathcal{A}^c$, we see that for at least a $1-2\epsilon$ proportion of $x$ we have a $P$ for $\mathcal{A}$ and a $Q$ for $\mathcal{A}^C$ that satisfy our conclusion. By considering the linear interpolation of $P$ and $Q$ we see there is a $R$ such that $\mu(\mathcal{A}) -\delta \leq \mathbb{P}(X+Y_R \in \mathcal{A}) \leq \mu(\mathcal{A}) + \delta$ with $\norm{R}_2^2 \leq \frac{\log(\frac{2}{\epsilon})}{2\delta\epsilon}$.} as the density of $\mathcal{A}$ in the cube. Here, `small ball around $x$' is taken to mean that there is noise $Y_P$ which we add to $x$ such that $\left \| P \right \|_2 ^2$ is small. Thus, we have a probability distribution on $\{0,1\}^n$ which is concentrated around $x$ and `sees' the true density of $\mathcal{A}$. The $\ell_2$ norm is the natural measure of size in this context, motivated by the concentration-of-measure phenomenon. Consider for example the families $\mathcal{A}_J$, for each $J$ a non empty subset of $[n]$, defined to be the set of vectors in $\{0,1\}^n$ with $\sum_{i \in J} x_i \geq \frac{k}{2} + \sqrt{k}$. Note that $\mathcal{A}_J$ has measure uniformly bounded away from 0 and for $x$ with $\sum_{i \in J} x_i \leq  \frac{k}{2}$ we must alter $\sim\sqrt{k}$ coordinates in $J$ in order to be in $\mathcal{A}_J$. So one would take $P_i \sim \frac{1}{\sqrt{|J|}}$ for $i$ in $J$ and $0$ otherwise.
\\
\\
There are also examples to show that $P$ must in general depend on $x$. Consider the tribes example of Ben-Or and Linial \cite{Tribes}: we let $n$ equal $rs$ and consider $[n]$ as $X_1 \cup X_2 ... \cup X_s$ where each $X_i$ has size $r$ and they are mutually disjoint. Then we take $x$ to be an element of $\mathcal{A}$ if it contains some $X_i$. For $r$ chosen suitably (about $\log_2(n)$) we get $\mu(\mathcal{A}) \sim 1/2$. It is easy to see that a $1-o(1)$ proportion of elements not in $\mathcal{A}$ have some $X_i$ for which all bar one coordinate, $j$, is $1$. So in Theorem 1.1 one simply takes $P_j = 1$ and all other entries to be $0$. However, a straightforward calculation shows that if one chooses the same $P$ (with bounded $\ell_2$ norm) for all $x$ then we have that only a $o(1)$ proportion of elements not in $\mathcal{A}$ satisfy the conclusion of Theorem 1.1.
\\
\\
Before we prove Theorem 1.1 we will deduce a simple corollary that extends the concentration-of-measure phenomenon to `well-spread' subsets of the discrete cube.

\begin{definition}[$(\gamma,C)$-well-spread]
A subset $S$ of $\{0,1\}^n$ is $(\gamma,C)$-well-spread if for every $P \in [0,1]^n$ with $\left \| P \right \|_2 ^2 \leq C$, we have $\mathbb{P}(Y_P \in S) > 1-\gamma$.
\end{definition}

For example, the Hamming ball of radius $C\sqrt{n}$ in $\{0,1\}^n$ is $(\gamma,C')$-well-spread for any $C' < C^2$, $\gamma > 0$ and $n$ sufficiently large. To see this, simply observe that by Cauchy-Schwarz for every $P \in [0,1]^n$ with $\left \| P \right \|_2 ^2 \leq C'$, $\sum_{i=1}^{n} P_i \leq C'^{1/2}\sqrt{n}$. So we have that $\mathbb{E}(\sum_{i=1}^{n} (Y_P)_i) \leq C'^{1/2}\sqrt{n}$ and $\mathrm{Var}(\sum_{i=1}^{n} (Y_P)_i) \leq C'^{1/2}\sqrt{n}$. Hence, by Chebyshev's inequality, $\mathbb{P}(Y_P \in B(C\sqrt{n}))$ tends to $1$ as $n$ tends to infinity. 
\\
\\
For an example of a set that is not, for any $\gamma<1$ and $C$, $(\gamma,C)$-well-spread for sufficiently large $n$, consider the family of subsets of $[n]$ which do not contain a $3$-AP. To see this, take $P = (\sqrt{C/n},...,\sqrt{C/n})$. It is easy to see that with high probability $Y_P$ contains a $3$-AP. This is why our methods are unable to prove non-trivial upper bounds for the $3$-AP-intersection problem.

\begin{corollary}
For $\mathcal{A}$ a subset of $\{0,1\}^n$ with $\mu(\mathcal{A}) \geq \epsilon > 0$. Let $C = \frac{\log(\frac{2}{\epsilon})}{\epsilon^2}$. We have that for every $(\epsilon/2,C)$-well-spread subset $S$, $\mu(\mathcal{A} + S) \geq 1 - \epsilon$.
\end{corollary}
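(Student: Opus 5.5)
We deduce the corollary from Theorem 1.1 applied with $\delta = \epsilon/2$ and with the same $\epsilon$. The parameters match exactly: the radius bound in Theorem 1.1 becomes
$$\frac{\log(\frac{2}{\epsilon})}{2\delta\epsilon} = \frac{\log(\frac{2}{\epsilon})}{\epsilon^2} = C.$$
Theorem 1.1 therefore gives a set $G \subset \{0,1\}^n$ with $\mu(G) \geq 1-\epsilon$. For every $x \in G$ there is some $P = P(x) \in [0,1]^n$ with $\norm{P}_2^2 \leq C$ and
$$\mathbb{P}(x + Y_P \in \mathcal{A}) \geq \mu(\mathcal{A}) - \epsilon/2 \geq \epsilon/2.$$
It then suffices to show $G \subset \mathcal{A} + S$.

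Fix $x \in G$ and its $P$. Since $S$ is $(\epsilon/2, C)$-well-spread and $\norm{P}_2^2 \leq C$, we have $\mathbb{P}(Y_P \in S) > 1 - \epsilon/2$, so $\mathbb{P}(Y_P \notin S) < \epsilon/2$. By the union bound,
$$\mathbb{P}(x + Y_P \in \mathcal{A} \text{ and } Y_P \in S) \geq \mathbb{P}(x+Y_P \in \mathcal{A}) - \mathbb{P}(Y_P \notin S) > \epsilon/2 - \epsilon/2 = 0.$$
Hence some $y \in S$ has $x + y = a \in \mathcal{A}$. Over $\mathbb{F}_2$ this gives $x = a + y \in \mathcal{A} + S$. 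So $G \subset \mathcal{A}+S$, and $\mu(\mathcal{A}+S) \geq 1-\epsilon$.

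The only delicate point is the strictness of the inequality. The strict inequality $\mathbb{P}(Y_P \in S) > 1-\gamma$ in the definition of well-spread is exactly what makes the final probability positive rather than merely nonnegative, so no slack in $\delta$ is needed. The bound $\mu(\mathcal{A}) \geq \epsilon$ combined with $\delta = \epsilon/2$ leaves precisely $\epsilon/2$ of room. Beyond that, the deduction is a direct substitution into Theorem 1.1, and all the real work lies in that theorem.
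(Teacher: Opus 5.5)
Your proposal is correct and follows the paper's proof exactly. You apply Theorem 1.1 with $\delta=\epsilon/2$, so the norm bound becomes $C$, and then combine $\mathbb{P}(x+Y_P\in\mathcal{A})\ge\epsilon/2$ with $\mathbb{P}(Y_P\in S)>1-\epsilon/2$ to obtain some $y\in S$ with $x+y\in\mathcal{A}$; your explicit union bound and your remark on the strict inequality just spell out the step the paper states as ``there is an instance where $Y_P$ is in $S$ and $x+Y_P$ is in $\mathcal{A}$''.
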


\begin{proof}
Apply Theorem 1.1 with $\mathcal{A}$ and $\delta = \epsilon/2$. Then for all $x$ given by Theorem 1.1 with a $P$ such that $\left \| P \right \|_2 ^2 \leq \frac{\log(\frac{2}{\epsilon})}{\epsilon^2}$ and $\mathbb{P}(x+Y_P \in \mathcal{A}) \geq \mu(\mathcal{A}) -\delta \geq \epsilon/2$. As $S$ is $(\epsilon/2,C)$-well-spread $\mathbb{P}(Y_P\in S) > 1- \epsilon/2$ so there is an instance where $Y_P$ is in $S$ and $x+Y_P$ is in $\mathcal{A}$ so $x \in \mathcal{A} + S$.
\end{proof}

\begin{remark}
Since $B(C\sqrt{n})$ is $(\gamma,C')$-well-spread for any $C' < C^2$, $\gamma > 0$ and $n$ sufficiently large we have recovered the concentration-of-measure phenomenon.
\end{remark}

The idea behind the proof of Theorem 1.1 is that we reveal the coordinates of $x$ one by one (sampling $x$ uniformly at random in the discrete cube). We define $P_i$ by a process such that $P_i$ depends only on the first $i$ coordinates. We set all $P_i$ to initially equal $0$. We then after revealing the first $i$ coordinates view the expected probability that $x+Y_P$ is in $\mathcal{A}$ conditional on what we have seen. If in the next step the value of $X_{i+1}$ affects this a lot, we take $Z_{i+1}$ to be value that increases the expected value, we choose $P_{i+1}$ to be large if $X_{i+1} \neq Z_{i+1}$ otherwise we take $P_{i+1} = 0$ since we are happy with the value of $X_{i+1}$. If it has a small effect, we choose $P_{i+1}$ to be small as we do not care about the value of $X_{i+1}$. Then the expected probability — averaging over the unrevealed $X$ — that $X+Y_P$ is in $\mathcal{A}$ is a submartingale with respect to the natural filtration. We show that the bias upwards we have given it is enough so that with high probability we do not end too low, and the bias upwards is small enough such that with high probability we have chosen $\left \| P \right \|_2 ^2$ small.

\begin{proof}[Proof of Theorem 1.1]
Given an $\mathcal{A} \subset \{0,1\}^n$. We will consider $X = (X_i)_i$, a random variable distributed uniformly on $\{0,1\}^n$. To avoid confusion, we will use $\mathbb{P}_X$ to denote the probability with respect to this random variable and $\mathbb{P}_P$ to denote the probability considered in the statement of the theorem (and similarly with expectation). We will define $P$ such that $P_i$ depends only on $(X_1,..,X_i)$. We will show that the expected value of $\sum_{i=1}^{n} P_i^2$ is at most $\frac{\log(\frac{2}{\epsilon})}{4\delta}$. We will also show that $\mathbb{P}_X(\mathbb{P}_P(X+Y_P \in \mathcal{A}) \geq \mu(\mathcal{A}) -\delta)$ is at least $1-\frac{\epsilon}{2}$. So by applying Markov's inequality to $\left \| P \right \|_2 ^2$ and taking a union bound on the events $\{\left \| P \right \|_2 ^2 > \frac{\log(\frac{2}{\epsilon})}{\delta\epsilon}\}$ and $\{\mathbb{P}_P(X+Y_P \in \mathcal{A}) < \mu(\mathcal{A}) -\delta\}$ we may conclude.
\\
\\
Having revealed $(X_1,...,X_k)$ to equal $(x_1,...,x_k)$ and defined $(P_1,...,P_k)$, let $Q$ denote the element of $[0,1]^n$ equal to $P_i$ for $i \leq k$ and $0$ otherwise. We define $W_k$ to equal:  
$$\mathbb{E}_X(\mathbb{P}_Q(X+Y_Q \in \mathcal{A}) \mid (X_i)_{i \leq k} = (x_i)_{i \leq k})$$
We then have that $W_k = \frac{1}{2}\mathbb{E}_X(\mathbb{P}_Q(X+Y_Q \in \mathcal{A}) \mid (X_i)_{i \leq k} = (x_i)_{i \leq k}, X_{k+1} = 0)+\frac{1}{2}\mathbb{E}_X(\mathbb{P}_Q(X+Y_Q \in \mathcal{A}) \mid (X_i)_{i \leq k} = (x_i)_{i \leq k}, X_{k+1} = 1)$. 
\\
\\
We define $O_{k+1}$ to equal: 
$$\vert\mathbb{E}_X(\mathbb{P}_Q(X+Y_Q \in \mathcal{A}) \mid (X_i)_{i \leq k} = (x_i)_{i \leq k}, X_{k+1} = 0)-W_k|$$
Note that $O_{k+1}$ depends only on $(X_1,...,X_k)$ so is independent of $X_{k+1}$. We define $Z_{k+1}$ to denote the choice of $0$ or $1$ that maximises $\mathbb{E}_X(\mathbb{P}_Q(X+Y_Q \in \mathcal{A}) \mid (X_i)_{i \leq k} = (x_i)_{i \leq k}, X_{k+1} = Z_{k+1})$. Lastly, we define $P_{k+1}$ to equal $0$ if $X_{k+1} = Z_{k+1}$ and to equal $\min(CO_{k+1},1)$ if $X_{k+1} \neq Z_{k+1}$. We have set $C = \frac{\log(\frac{2}{\epsilon})}{2\delta}$ to ease notation.
\\
\\
By definition, $W_0 = \mu(\mathcal{A})$ and $W_n$ is $\mathbb{P}_P(X+Y_P \in \mathcal{A})$. Let $D_{k+1} = W_{k+1}-W_{k}$. Note that $W_0,W_n \in [0,1]$, and hence that $\sum_{i=1}^{n} D_i \leq 1$.
\\
\\
The following holds: $$D_{k+1} = O_{k+1}1_{\{X_{k+1} = Z_{k+1}\}} - (O_{k+1} - 2P_{k+1}O_{k+1})1_{\{X_{k+1} \neq Z_{k+1}\}}$$ This follows since $Q = (P_1,...,P_{k+1},0,...,0)$, if $X_{k+1} = Z_{k+1}$. Then $(Y_{Q})_{k+1}$ equals $0$, so we have that $\mathbb{E}_X(\mathbb{P}_Q(X+Y_Q \in \mathcal{A}) \mid (X_i)_{i \leq k+1} = (x_i)_{i \leq k+1})$ is just $W_k + O_{k+1}$ by the definition of $O_{k+1}$. In the case where $X_{k+1} \neq Z_{k+1}$, $\mathbb{E}_X(\mathbb{P}_Q(X+Y_Q \in \mathcal{A}) \mid (X_i)_{i \leq k+1} = (x_i)_{i \leq k+1})$ equals $P_{k+1}(W_i+O_i) + (1-P_{k+1})(W_i-O_i)$ as all the $(Y_Q)_i$ are mutually independent. 
\\
\\
Hence $\mathbb{E}_X(D_{k+1}) = \mathbb{E}_X(O_{k+1}1_{\{X_{k+1} = Z_{k+1}\}} - (O_{k+1} - 2P_{k+1}O_{k+1})1_{\{X_{k+1} \neq Z_{k+1}\}}) = \mathbb{E}_X(2P_{k+1}O_{k+1})$, using the fact that $O_{k+1}$ is independent from $X_{k+1}$ and that $P_{k+1} = 0$ if $X_{k+1} = Z_{k+1}$. So we have that, since $O_{k+1} \geq C^{-1}P_{k+1}$, $2C^{-1}\mathbb{E}_X(\sum_{i=1}^{n} P_i^2) \leq \mathbb{E}_X(\sum_{i=1}^{n} D_i) \leq 1$. So the expected value of $\sum_{i=1}^{n} P_i^2$ is at most $\frac{\log(\frac{2}{\epsilon})}{4\delta}$.
\\
\\
To show that $\mathbb{P}(W_n < \mu(A) - \delta) \leq \epsilon/2$, we first prove a very short and simple lemma in order to bound the expected value of $e^{-2C(W_n-\mu(\mathcal{A}))}$.

\begin{lemma}
For all $x \geq 0$, $\frac{1}{2}(e^{-x} + e^{x-x^2}) \leq 1$.
\end{lemma}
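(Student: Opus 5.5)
We need to prove $\frac{1}{2}(e^{-x}+e^{x-x^2})\le 1$ for all $x\ge 0$. Set $f(x)=e^{-x}+e^{x-x^2}$; the claim is $f(x)\le 2$. The plan is to first settle large $x$ by a crude estimate, and then handle the bounded range $[0,1]$ by a Taylor-type comparison.

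\emph{Large $x$.} For $x\ge 1$ we have $x-x^2\le 0$, so $e^{x-x^2}\le 1$. Also $e^{-x}\le e^{-1}<1$. Hence $f(x)<2$, and only $x\in[0,1]$ remains.

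\emph{The range $[0,1]$.} Here I compare $\cosh x$ with $e^{x^2/2}$. Multiplying $f(x)\le 2$ by $e^{x^2/2}>0$ turns it into
$$e^{-x+x^2/2}+e^{x-x^2/2}\le 2e^{x^2/2}.$$
With $u=x-x^2/2$, the left side is $2\cosh u$, so it suffices to show $\cosh u\le e^{x^2/2}$. Comparing power series termwise, $\frac{u^{2k}}{(2k)!}\le \frac{(u^2/2)^k}{k!}$ because $(2k)!\ge 2^k k!$. Hence $\cosh u\le e^{u^2/2}$. For $x\in[0,1]$ we have $0\le u\le x$, so $u^2\le x^2$ and therefore $\cosh u\le e^{u^2/2}\le e^{x^2/2}$. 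This proves the inequality on $[0,1]$.

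\emph{Where the care is needed.} The only delicate point is the choice of normalisation: one must multiply by $e^{x^2/2}$ rather than expand $f$ directly, so that the symmetric $\cosh$ form appears. After that, everything reduces to the standard bound $\cosh u\le e^{u^2/2}$ together with $|u|\le x$ on $[0,1]$. The large-$x$ case needs no work beyond the two one-line estimates above.
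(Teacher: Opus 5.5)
Your proof is correct, but it takes a different route from the paper.

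The paper argues by monotonicity. With $f(x)=e^{-x}+e^{x-x^2}$ and $f(0)=2$, it computes $f'(x)=-e^{-x}+(1-2x)e^{x-x^2}$. Using $1-2x\le e^{-2x}$ it gets $f'(x)\le -e^{-x}(1-e^{-x^2})\le 0$, so $f$ is non-increasing and no case split is needed.

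You avoid calculus. Multiplying by $e^{x^2/2}$ reduces the claim to $\cosh u\le e^{x^2/2}$ with $u=x-x^2/2$, which follows from the series bound $\cosh u\le e^{u^2/2}$ and $|u|\le x$. Since $|u|\le x$ fails for large $x$, you need a separate crude estimate there. Your split at $1$ is more than necessary, because $|x-x^2/2|\le x$ holds on all of $[0,4]$, but it is valid and harmless.

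The paper's argument is shorter and gives the stronger fact that $f$ is decreasing. Yours explains where the lemma comes from in the proof of Theorem 1.1. There, in the case $P'_{k+1}=CO_{k+1}$ and with $x=2CO_{k+1}$, the exponent $-2CD_{k+1}$ takes the values $-x$ and $x-x^2$ with probability $\frac12$ each. That is, it equals its mean $-x^2/2$ plus a symmetric $\pm u$. Your normalisation is exactly this decomposition. Read this way, the lemma says the exponential moment $\cosh u\le e^{u^2/2}$ of the symmetric part is paid for by the drift factor $e^{-x^2/2}$ that the choice $P_{k+1}=CO_{k+1}$ builds in.
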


\begin{proof}
Let $f(x) = e^{-x} + e^{x-x^2}$. As $f(0) = 2$, it suffices to prove that $f'(x) \leq 0$ for all $x$. $f'(x) = -e^{-x}+(1-2x)e^{x-x^2} \leq -e^{-x}(1-e^{-x^2}) \leq 0$. Here we have used that, for all $x$, $1-2x \leq e^{-2x}$.
\end{proof}

We have that $\mathbb{E}_X(e^{-2C(W_n-\mu(\mathcal{A}))}) = \mathbb{E}_X(e^{-2C\sum_{i=1}^{n} D_i})$. We show inductively, that for all $k$, $\mathbb{E}_X(e^{-2C\sum_{i=1}^{k} D_i})$ is at most $1$. We can take the base case to be $k=0$, which is trivial. For the inductive step, let $P_{k+1}'$ denote $\min(CO_{k+1},1)$, i.e the possible value that $P_{k+1}$ may take. We have that:
\begin{align*}
\mathbb{E}_X(e^{-2C\sum_{i=1}^{k+1} D_i}) &= \frac{1}{2}\mathbb{E}_X(e^{-2C\sum_{i=1}^{k+1} D_i} \mid X_{k+1} = Z_{k+1}) + \frac{1}{2}\mathbb{E}_X(e^{-2C\sum_{i=1}^{k+1} D_i} \mid X_{k+1}\neq Z_{k+1})\\
&=\frac{1}{2}\mathbb{E}_X(e^{-2C\sum_{i=1}^{k} D_i}e^{-CO_{k+1}}) + \frac{1}{2}\mathbb{E}_X(e^{-2C\sum_{i=1}^{k} D_i}e^{2CO_{k+1}-4CO_{k+1}P_{k+1}'})\\
&=\mathbb{E}_X(e^{-2C\sum_{i=1}^{k} D_i}\frac{1}{2}(e^{-2CO_{k+1}}+e^{2CO_{k+1}-4CO_{k+1}P_{k+1}'}))
\end{align*}
We have that $\frac{1}{2}(e^{-2CO_{k+1}}+e^{2CO_{k+1}-4CO_{k+1}P_{k+1}'})$ is always at most $1$, as if $P_{k+1}' = 1$ then both terms are less than $1$ and otherwise we can apply our lemma with $x = 2CO_{k+1}$. So our induction is complete.
\\
\\
Applying Markov's inequality to $e^{-2C(W_n-\mu(\mathcal{A}))}$ we obtain that $\mathbb{P}_X(W_n < \mu(\mathcal{A}) -\delta)<e^{-2C\delta} = \frac{\epsilon}{2}$. This completes the proof.
\end{proof}

We can improve $\mathbb{P}((x+B(Y_P)) \cap \mathcal{A} \neq \emptyset)$ at the expense of $\norm{P}_2$ by a standard argument. Note that $\mathbb{P}((x+B(Y_P)) \cap \mathcal{A} \neq \emptyset)$ is at least $\mathbb{P}(x+Y_P \in \mathcal{A})$. If we define $Q$ pointwise to equal $2P-P^2$, we then have that $B(Y_Q)$ is distributed identically to the sum of two independent copies of $B(Y_P)$. Hence, $\mathbb{P}((x+B(Y_Q)) \cap \mathcal{A} \neq \emptyset) \geq 2\mathbb{P}((x+B(Y_P)) \cap \mathcal{A} \neq \emptyset)-\mathbb{P}((x+B(Y_P)) \cap \mathcal{A} \neq \emptyset)^2$. As $Q \leq 2P$ pointwise, $\norm{Q}_2 \leq 2\norm{P}_2$. Bootstrapping this, we can see that at the expense of a factor of roughly  $\mu(\mathcal{A})^{-1}$ in $\norm{P}_2$ we can instead ask for $\mathbb{P}((x+B(Y_P)) \cap \mathcal{A} \neq \emptyset)$ to be at least any fixed constant less than $1$.
\\
\\
Before moving on to the applications concerning the intersection problems we will show a straightforward application of Theorem 1.1 to prove Talagrand's inequality \cite{Talagrand} in the discrete cube with weaker bounds. Given a weighting $w \in \mathbb{R}_{\geq 0}^n$ and $x,y \in \{0,1\}^n$ define the distance $d_w(x,y)$ to equal $\sum_{i=1}^{n} w_i \mathds{1}(x_i \neq y_i)$.

\begin{corollary}[Talagrand's Inequality with weaker bounds]
For every $\epsilon \geq 0$ and $\mathcal{A} \subset \{0,1\}^n$, at least a $1-\epsilon$ proportion of $x$ in $\{0,1\}^n$ have $\sup_{\norm{w}_2 = 1}d_w(x,\mathcal{A}) \leq \frac{2}{\mu(\mathcal{A})} \sqrt{\frac{ \log(\frac{2}{\epsilon})}{\mu(\mathcal{A})\epsilon}}$.
\end{corollary}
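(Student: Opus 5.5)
The plan is to apply Theorem 1.1 with $\delta = \mu(\mathcal{A})/2$. Then Markov-type averaging, applied to the random point $x+Y_P$, turns the weighted distance bound into a statement about an actual element of $\mathcal{A}$. The key feature is that Theorem 1.1 produces one $P$ for each good $x$, independently of any weighting $w$. So a single random point controls $d_w(x,\mathcal{A})$ for all $w$ with $\norm{w}_2 = 1$ at once, and the supremum costs nothing. (For $\epsilon = 0$ the bound is infinite and there is nothing to prove, so assume $\epsilon > 0$; likewise assume $\mu(\mathcal{A})>0$.)

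\textbf{Step 1.} Theorem 1.1 with $\delta = \mu(\mathcal{A})/2$ gives that for at least a $1-\epsilon$ proportion of $x$ there is a $P \in [0,1]^n$ with
$$\norm{P}_2^2 \leq \frac{\log(\frac{2}{\epsilon})}{\mu(\mathcal{A})\epsilon} \quad \text{and} \quad \mathbb{P}(x+Y_P \in \mathcal{A}) \geq \frac{\mu(\mathcal{A})}{2}.$$
Fix such an $x$ and $P$.

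\textbf{Step 2.} Fix any $w$ with $\norm{w}_2=1$. The point $x+Y_P$ differs from $x$ exactly on the coordinates where $(Y_P)_i=1$, so $d_w(x,x+Y_P)=\sum_i w_i (Y_P)_i$. Hence, by Cauchy--Schwarz,
$$\mathbb{E}\,d_w(x,x+Y_P)=\sum_i w_iP_i\le \norm{P}_2 .$$

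\textbf{Step 3.} Since $d_w\ge 0$, we have $\mathbb{E}\bigl(d_w(x,x+Y_P)\mathds{1}(x+Y_P\in\mathcal{A})\bigr)\le\norm{P}_2$. Dividing by $\mathbb{P}(x+Y_P\in\mathcal{A})\ge\mu(\mathcal{A})/2$ shows that the conditional expectation of $d_w(x,x+Y_P)$, given $x+Y_P\in\mathcal{A}$, is at most $2\norm{P}_2/\mu(\mathcal{A})$. Therefore some outcome $y\in\mathcal{A}$ of $x+Y_P$ has $d_w(x,y)\le 2\norm{P}_2/\mu(\mathcal{A})$.

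\textbf{Step 4.} Substituting the bound on $\norm{P}_2$ gives
$$d_w(x,\mathcal{A})\le \frac{2}{\mu(\mathcal{A})}\sqrt{\frac{\log(\frac{2}{\epsilon})}{\mu(\mathcal{A})\epsilon}}.$$
This is uniform in $w$, so taking the supremum over $w$ completes the proof.

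The only delicate point is obtaining the exact constant without strict-inequality issues. A plain tail bound (Markov on $d_w$ combined with a union bound against the event $x+Y_P\notin\mathcal{A}$) would require a strictly larger threshold than the stated one. Working with the conditional expectation on the event $\{x+Y_P\in\mathcal{A}\}$, as in Step 3, avoids this and yields precisely the stated bound.
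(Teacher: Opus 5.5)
Your proof is correct and follows essentially the paper's route. Both apply Theorem 1.1 with $\delta=\mu(\mathcal{A})/2$, use Cauchy--Schwarz to get $\mathbb{E}\sum_i w_i(Y_P)_i\le\norm{P}_2$, and then average to extract an outcome in $\mathcal{A}$; the only difference is that the paper uses Markov's inequality plus a union bound where you condition on $\{x+Y_P\in\mathcal{A}\}$.

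Your closing remark is mistaken, though. Take $a=\frac{2}{\mu(\mathcal{A})}\sqrt{\log(\frac{2}{\epsilon})/(\mu(\mathcal{A})\epsilon)}$ and $Z=\sum_i w_i(Y_P)_i$. Whenever $\mathbb{P}(Z>a)>0$ one has $a\,\mathbb{P}(Z>a)<\mathbb{E}(Z\mathds{1}(Z>a))\le\mathbb{E}Z$, so in every case $\mathbb{P}(Z>a)<\mu(\mathcal{A})/2\le\mathbb{P}(x+Y_P\in\mathcal{A})$. Hence the paper's tail-bound route also gives exactly the stated threshold.
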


\begin{proof}
First we apply Theorem 1.1 with $\delta = \mu(\mathcal{A})/2$ giving us that $1-\epsilon$ proportion of $x$ have a vector of probabilities $P$ such that $\mathbb{P}(x+ Y_P \in \mathcal{A}) \geq \delta/2$ and $\norm{P}_2^2 \leq \frac{\log(\frac{2}{\epsilon})}{\mu(\mathcal{A})\epsilon}$. For every weighting $w$ of norm $1$ we have that $d_w(x+Y,x) = \sum_{i=1}^{n}Y_iw_i \leq \norm{Y}_2\norm{w}_2 = \norm{Y}_2$.
\\
\\
By Markov's inequality, for any $x$ and $P$ as above we have that $\mathbb{P}\big(\sum_{i=1}^{i=n} (Y_P)_iw_i >\frac{2}{\mu(\mathcal{A})} \sqrt{\frac{ \log(\frac{2}{\epsilon})}{\mu(\mathcal{A})\epsilon}}\big) < \mu(\mathcal{A})/2$. So there exists a $Y$ such that  $x + Y$ is in $\mathcal{A}$ and $\sum_{i=1}^{i=n} Y_iw_i \leq \frac{2}{\mu(\mathcal{A})} \sqrt{\frac{ \log(\frac{2}{\epsilon})}{\mu(\mathcal{A})\epsilon}}$. Hence for all weightings $w$, $d_w(x,\mathcal{A}) \leq \frac{2}{\mu(\mathcal{A})} \sqrt{\frac{ \log(\frac{2}{\epsilon})}{\mu(\mathcal{A})\epsilon}}$.
\end{proof}

\section{Large Intersection Problems}
In this section we use Theorem 1.1 to obtain non-trivial upper bounds for two intersection problems. The proof strategy for both of them will be the same. If $\mathcal{A}$ is a collection of sets that intersect in elements of an up-set $\mathcal{C}$ then we have that $\mathcal{A}^c+\mathcal{C}^c$ cannot intersect $\mathcal{A}$, for if it did then we would have $y \in \mathcal{A}$ and $z \notin \mathcal{C}$ such that $y^c + z \in \mathcal{A}$. But $y \cap (y^c+z) = y \cap z \subset z$ and since $\mathcal{C}$ is an up-set we have that $y \cap (y^c +z) \notin \mathcal{C}$. Let $\gamma$ and $C$ be as in Corollary 1.2 to ensure that whenever $\mu(\mathcal{B}) \geq \epsilon$ and $S$ is $(\gamma,C)$-well-spread we have that $\mu(\mathcal{B}+S) > 1-\epsilon$. Hence, if $\mathcal{C}^c$ is $(\gamma,C)$-well-spread we may conclude that $\mu(\mathcal{A}) \leq \epsilon$ for every $\mathcal{A}$ that is $\mathcal{C}$-intersecting.
\\
\\
In both of the following theorems we will not optimise for constants in two respects. We do not aim to minimise the length of the arithmetic progression for which we get a non-trivial upper bound or the size of the bipartite graph, as that is the purpose of the next section. Nor do we get the best possible constants as the length of the arithmetic progression (and size of the graph) grows. In the last section we will see that our methods have a natural barrier and are unable to prove anything faster than polynomial decay (in terms of the size of the AP and graph). Since the Erdős-Ko-Rado construction gives exponential decay and we believe that this is much closer to the truth, we do not think there is interest in optimising the polynomial decay.

\begin{theorem}
For $k$ sufficiently large, $\mu(\mathcal{A}) \leq k^{-1/3}$ for every $k$-AP-intersecting family $\mathcal{A}$.
\end{theorem}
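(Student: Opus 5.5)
We follow the strategy described at the start of this section. Let $\mathcal{C}$ be the up-set of subsets of $[n]$ containing a $k$-AP, and set $\epsilon = k^{-1/3}$. If $\mathcal{A}$ is $k$-AP-intersecting, then $\mathcal{A}^c + \mathcal{C}^c$ is disjoint from $\mathcal{A}$. We may assume $\mu(\mathcal{A}) = \mu(\mathcal{A}^c) \geq \epsilon$, since otherwise there is nothing to prove. By Corollary 1.2 applied to $\mathcal{A}^c$, it suffices to show that $S = \mathcal{C}^c$, the family of $k$-AP-free sets, is $(\epsilon/2, C)$-well-spread with $C = \log(2/\epsilon)/\epsilon^2$. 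Then $\mu(\mathcal{A}^c + S) \geq 1-\epsilon$, which forces $\mu(\mathcal{A}) \leq \epsilon$. Note that $C = k^{2/3}\log(2k^{1/3}) = O(k^{2/3}\log k)$.

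The main step is therefore the following estimate: for every $P \in [0,1]^n$ with $\norm{P}_2^2 \leq C$,
$$\mathbb{P}(Y_P \text{ contains a } k\text{-AP}) < \epsilon/2 .$$
We prove it by a union bound over $k$-APs. A $k$-AP $a, a+d, \dots, a+(k-1)d$ is contained in $Y_P$ with probability $\prod_{j=0}^{k-1} P_{a+jd}$. To control the sum of these products, we pair the AP with its first two terms, which determine it. By AM--GM, the product of the remaining $k-2$ terms is at most $\bigl(\frac{1}{k-2}\sum_{j\geq 2} P_{a+jd}^2\bigr)^{(k-2)/2}$. Hence the whole sum is at most
$$\sum_{a \neq b} P_a P_b \,\Bigl(\frac{C}{k-2}\Bigr)^{(k-2)/2},$$
since the sum of $P_i^2$ along any AP is at most $\norm{P}_2^2 \leq C$.

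This bound alone is too weak, because $C/(k-2)$ is not less than $1$; in fact $C \approx k^{2/3}\log k < k$ only barely helps. We will need the finer fact that among any $k$ values $P_i \in [0,1]$ with $\sum P_i^2 \leq C$, the product is tiny. Indeed, at most $C/t^2$ of the terms can be at least $t$. Taking $t = 1/2$, all but $4C = o(k)$ of the terms are below $1/2$, so the product is at most $2^{-(k-4C)}$, which is roughly $2^{-k}$. We still have to sum over the choices of $(a,d)$, which may number up to $n^2$ with $n$ unbounded.

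To handle this we weight the union bound. We use two terms of the AP, say positions $0$ and $1$, to pay for the choice of $(a,d)$, and we bound
$$\sum_{a,d} P_a P_{a+d} \le \Bigl(\sum_i P_i\Bigr)^2 .$$
This fails, since $\norm{P}_1$ can be as large as $\sqrt{Cn}$. Instead we choose the pairing differently. Using Cauchy--Schwarz over pairs of positions, $\sum_{a,d} P_a^2 P_{a+d}^2 \leq C^2$. So we split each AP's product as $(P_aP_{a+d})\cdot(\text{rest})$ and bound the rest by $\max$ over the remaining terms. This requires pairing against squares, so we split the product as $\sqrt{\prod}\cdot\sqrt{\prod}$ and apply Cauchy--Schwarz across APs. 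Concretely, we bound
$$\sum_{\text{APs}} \prod_j P_{a+jd} \le \sum_{a,d} P_a^2 P_{a+d}^2 \cdot \max \prod_{j\geq 2} P_{a+jd}^{\,\cdot}$$
after choosing which two of the $k$ terms are the largest. By symmetry, summing over the $\binom{k}{2}$ choices of which pair carries the squares, we obtain a total bound of roughly $k^2 \cdot C^2 \cdot 2^{-(k-4C)/1}$. Since $C = O(k^{2/3}\log k)$, this is $o(k^{-1/3})$, and the estimate follows for $k$ large.

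The main obstacle is precisely this $n$-independent union bound. The AM--GM/Cauchy--Schwarz bookkeeping must yield a $\norm{P}_2^4$ factor rather than an $\norm{P}_1$ factor, and the remaining $k-2$ factors must still decay exponentially in $k$ despite up to $4C$ of them being large. If the pairing argument above proves awkward, we would first try the cleaner route of replacing each product by the product of the two largest factors squared times $(1/2)^{k-2-4C}$, and summing the squared pairs over APs using the fact that two positions determine the AP up to $\binom{k}{2}$ choices.
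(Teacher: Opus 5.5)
Your argument is correct, and it proves the key estimate by a different route from the paper. The reduction is the same: Corollary 1.2 applied to $\mathcal{A}^c$, so it suffices that $k$-AP-free sets are well spread.

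The difference is in the union bound. The paper stratifies coordinates dyadically, $I_m=\{i : P_i>2^{-m}\}$ with $|I_m|\le 4^mC$. It assigns each $k$-AP to the first level containing at least half of its terms, and counts the APs at level $m$ through pairs inside $I_m$ (at most $k^2|I_m|^2$). It bounds each one's probability by $2^{-(m-1)k/2}$ and sums a geometric series to get $16(4C/k)^{k/4}k^2$.

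You use a single threshold at $\frac12$ and pay for the choice of AP with the weight $P_i^2P_j^2$ of its two largest entries. So your independence from $n$ comes from $\sum_{i\neq j}P_i^2P_j^2\le\norm{P}_2^4$ rather than from counting inside level sets. Yours is shorter and single-scale. The paper's bound is $e^{-\Theta(k\log k)}$ rather than your $2^{-k(1-o(1))}$; you would match it by thresholding at $\sqrt{2C/k}$ instead of $\frac12$. Both are far below $k^{-1/3}/2$, and the paper's stratification is the template it reuses for $K_{3,t}$.

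In the write-up, drop the false starts and state the one inequality you need. If $p_1\ge\cdots\ge p_k$ are the values of $P$ on an AP, then $\prod_j p_j\le p_1^2p_2^2\prod_{j\ge5}p_j\le p_1^2p_2^2\,2^{-(k-4-4C)}$. This uses $p_3\le p_1$ and $p_4\le p_2$, which is where the squares come from; your display with the unspecified exponent and the ``Cauchy--Schwarz across APs'' do not show it. It also uses that at most $4C$ coordinates satisfy $P_i\ge\frac12$.

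Now charge each AP to the coordinates carrying $p_1,p_2$, with a fixed tie-break. A pair of points lies in at most $\binom{k}{2}$ distinct $k$-APs, so the expected count is at most $\binom{k}{2}\frac{C^2}{2}2^{-(k-4-4C)}$.

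Finally, your remark that $C/(k-2)$ is not less than $1$ is false, since $C=O(k^{2/3}\log k)$. The real defect of the first AM--GM bound is the factor $\sum_{a\neq b}P_aP_b\le\norm{P}_1^2$, as you note afterwards.
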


\begin{proof}
We need only prove that the $k$-AP-free subsets are $(\gamma,C)$-well-spread for $\gamma = k^{-1/3}$ and $C = \frac{2}{3}k^{2/3}\log{k}$ as per the discussion at the start of the section. 
\\
\\
For a specific vector of probabilities $P$ with $\left \| P \right \|_2 ^2 \leq C$, let $I_m = \{i | P_i > 1/2^m\}$. We can simply bound the expected number of $k$-APs. Let $K$ denote the number of $k$-APs in $Y_P$. Note that the upper bound on the sum of the squares implies that $I_m$ has size at most $2^{2m}C$. Define $K_m$ to be the set of $k$-APs in $[n]$ such that they have at least $k/2$ elements in $I_m$ and less than $k/2$ elements in $I_{m-1}$. Note that all progressions with a non-zero probability of being a subset of $Y_P$ appear in some $K_m$. By definition, the contribution of and element of $K_m$ to the expected value of $K$ is at most $(1/2^{m-1})^{k/2}$. Also note that $K_m$ is empty for $m$ such that $2^{2m} < k/2C$. So we can bound $\mathbb{E}(K)$ as follows:
$$
\mathbb{E}(K) \leq \sum_{m \geq 1} (1/2^{m-1})^{k/2}|K_m|1_{\{2^{2m} \geq k/2S\}}
$$
To bound $|K_m|$, since the knowledge of the relative placing of two elements in an arithmetic progression fixes the others, we have that each pair of elements in $I_m$ is in at most $k^2$ arithmetic progressions together. So we have, rather crudely, that $|K_m| \leq k^2|I_m|^2 \leq 2^{4m}S^2$. Putting this into the expression above, noting that as $m$ increases by $1$ we at least halve the term $(1/2^{m-1})^{k/2}2^{4m}C^2$ for $k \geq 9$, we get that $\mathbb{E}(K)$ is at most twice the first non-zero term which is at most, after simplifying:
$$
16(4C/k)^{k/4}k^2
$$
Since $C < k/8$ for $k$ sufficiently large we may conclude.
\end{proof}

As our second application, we obtain non-trivial upper bounds for $m(K_{3,t})$ for $t$ sufficiently large.

\begin{theorem}
For $t$ sufficiently large, $m(K_{3,t}) \leq t^{-1/12}$.
\end{theorem}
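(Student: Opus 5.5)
The plan is the one described at the start of this section. If $\mathcal{C}$ is the up-set of graphs on $[n]$ containing a copy of $K_{3,t}$, then $\mathcal{C}^c$ is the family of $K_{3,t}$-free graphs, viewed inside $\{0,1\}^{\binom{n}{2}}$. Take $\epsilon = t^{-1/12}$ in Corollary 1.2, so that
\[ C = \frac{\log(2/\epsilon)}{\epsilon^2} = t^{1/6}\log(2t^{1/12}). \]
It then suffices to show that the $K_{3,t}$-free graphs are $(\epsilon/2, C)$-well-spread. Concretely: for every edge-probability vector $P$ with $\sum_e P_e^2 \le C$, the expected number $\mathbb{E}(K)$ of copies of $K_{3,t}$ in $Y_P$ should be below $\epsilon/2$. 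Markov's inequality then gives $\mathbb{P}(Y_P \text{ is } K_{3,t}\text{-free}) > 1-\epsilon/2$.

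Rather than the dyadic decomposition used for APs, I would bound $\mathbb{E}(K)$ directly. Write $p_{av} = P_{\{a,v\}}$, with $p_{aa}=0$, and set $s_a = \sum_v p_{av}^2$, so that $\sum_a s_a = 2\sum_e P_e^2 \le 2C$. Fix the side of size $3$ as $\{a,b,c\}$ and define $f(a,b,c) = \sum_v p_{av}p_{bv}p_{cv}$. Choosing the $t$ vertices of the other side as an unordered set gives
\[ \mathbb{E}(K) \le \sum_{\{a,b,c\}} \frac{f(a,b,c)^t}{t!}, \]
since an ordered choice of $t$ distinct vertices is bounded by the full product of sums.

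Two estimates control the right-hand side.
\begin{itemize}
\item Cauchy--Schwarz gives $f(a,b,c) \le \sum_v p_{av}p_{bv} \le \sqrt{s_as_b} \le C$. Hence $f^t \le C^{t-2}f^2$.
\item For the sum of squares, expand $f^2$ as $\sum_{v,w}$ and exchange sums:
\[ \sum_{a,b,c} f(a,b,c)^2 = \sum_{v,w}\Big(\sum_a p_{av}p_{aw}\Big)^3 . \]
Each inner sum is at most $\sqrt{s'_v s'_w}\le C$, where $s'_v=\sum_a p_{av}^2$. Pulling out one factor leaves $C\,\|M\|_F^2$ with $M = \Pi\Pi^T$, where $\Pi=(p_{av})$. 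Since $\|M\|_F \le \|\Pi\|_F^2 \le 2C$, the total is at most $4C^3$.
\end{itemize}
Combining these, $\mathbb{E}(K) \le 4C^{t+1}/t! \le 4C(eC/t)^t$. With $C = t^{1/6}\log(2t^{1/12})$, this is super-exponentially small in $t$, and in particular less than $t^{-1/12}/2$ for $t$ large.

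The main obstacle I expect is this counting step: bounding the expected number of copies uniformly over all $P$, including very non-uniform ones concentrated on high-degree vertices. The naive route of counting vertex choices from $[n]$ fails because $n$ is unbounded. The Frobenius-norm trick, which bounds $\sum f^2$ by $\|\Pi\Pi^T\|_F^2$, is what makes the estimate independent of $n$. I would double-check the exchange of sums and the handling of degenerate terms, such as $v=w$ or repeated vertices, but these only decrease the counts.

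Once the well-spread property holds, Corollary 1.2 applied to $\mathcal{A}^c$ yields a contradiction if $\mu(\mathcal{A}) \ge \epsilon$. Indeed, $\mu(\mathcal{A}^c + \mathcal{C}^c) > 1-\epsilon$ while this set is disjoint from $\mathcal{A}$. Hence $m(K_{3,t}) \le t^{-1/12}$.
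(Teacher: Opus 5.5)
Your proof is correct, but it checks the well-spread property by a different route from the paper's.

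\textbf{The paper's route.} It stratifies edges dyadically into $I_m=\{e : P_e>2^{-m}\}$, with $|I_m|\le 4^mC$. It then passes from $K_{3,t}$ to ``unpopular'' copies of $K_{3,s}$ with $s=\lfloor t/2\rfloor$, using that at most $t/2$ edges have $P_e\ge\sqrt{2C/t}$. Each vertex is labelled by the first level containing one of its edges, and embeddings are counted through one prescribed edge per vertex. Splitting edge weights as $P_e^{5/6}P_e^{1/6}$ then produces a geometric series. This is the same level-counting template as its $k$-AP argument.

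\textbf{Your route.} You use the product structure of $K_{3,t}$ directly. Choosing the $t$-side as an unordered set gives the factor $1/t!$. The pointwise bound $f\le C$ reduces $\sum f^t$ to $\sum f^2$. The identity $\sum_{a,b,c}f(a,b,c)^2=\sum_{v,w}(\Pi\Pi^T)_{vw}^3\le C\,\|\Pi\Pi^T\|_F^2\le 4C^3$ makes the estimate independent of $n$ without any dyadic or popularity bookkeeping.

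\textbf{What your route buys.} It is shorter, and it is also quantitatively much stronger. The bound $4C^{t+1}/t!\le 4C(eC/t)^t$ stays exponentially small for $C$ as large as $t/(2e)$. Feeding that into Corollary 1.2 gives $m(K_{3,t})=O(\sqrt{\log t/t})$ rather than $t^{-1/12}$. The same computation, using $\sum_{v,w}M_{vw}^r\le C^{r-2}\|M\|_F^2$, also handles $K_{r,t}$ for every fixed $r\ge 2$.

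One small correction: Corollary 1.2 gives $\mu(\mathcal{A}^c+\mathcal{C}^c)\ge 1-\epsilon$, not $>1-\epsilon$. This still yields $\mu(\mathcal{A})\le\epsilon$, which is all you need.
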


\begin{proof}
Our starting point is identical to the previous proof. Now we have $\gamma = t^{-1/12}$ and $C = \frac{1}{6}t^{1/6}\log{t}$. We take a vector of probabilities $P$ with $\left \| P \right \|_2 ^2 \leq C$ and we stratify the edges into $I_m$ as before. Now instead of bounding the expected number of $K_{3,t}$, we will rather bound the expected number of `unpopular' $K_{3,s}$ where $s = \lfloor\frac{t}{2}\rfloor$. Here, we define a $K_{3,s}$ to be unpopular if every edge that it contains has $P_e < \sqrt{2C/t}$. Note that if $Y_P$ contains a $K_{3,t}$ then it contains an unpopular $K_{3,s}$. So we may look at edges in $I_m$ only for $m \geq \log_4{(t/2C})$.
\\
\\
Label a $K_{3,s}$ with $(m_1,m_2,m_3,m_1',...,m_s')$, to indicate for each vertex $v_i$ or $v_i'$ in the other half of the bipartition, the minimal $m$ for which they are incident with an edge from $I_m$. Now we upper bound how many $K_{3,s}$ can have a given label. We will consider the number of injections of $K_{3,s}$ into $K_n$ with each vertex, $v_i$/$v_i'$, in the $K_{3,s}$ being prescribed an edge adjacent to it and this edge being forced to go to an element of $I_{m_i}$/$I_{m_i'}$. We have at most $s^33^s$ choices for which edges to prescribe. We then have at most $\prod_{i=1,j=1}^{i=3,j=s} |I_{m_i}||I_{m_j'}|$ choices where to send the specialised edges. After this our hand is forced as we know where all the vertices go. For a given labelled copy $(m_1,m_2,m_3,m_1',...,m_s')$ we have that the probability it is in $Y_P$ is at most

$$
\prod_{i=1,j=1}^{i=3,j=s}2^{-t(m_i-1)/6-5(m_i'-1)/2},
$$

using the fact that a vertex labelled with $m_i$ has all probabilities incident with it at most $2^{-m_i+1}$ and splitting an edges' probability into $P_e^{5/6}$ and $P_e^{1/6}$ when incident to one of the one of the $v_i'$ and one of the $v_i$ respectively. So we obtain an upper bound for the expected number of these $K_{3,s}$, summing over all possible choices of labelling, of

$$
\sum_{(m_1,...,m_s')}s^33^s \prod_{i=1,j=1}^{i=3,j=s}2^{-t(m_i-1)/6-5(m_i'-1)/2} |I_{m_i}| |I_{m_i'}|
$$

For $t$ sufficiently large this is at most: 

\begin{align*}
s^33^s (\sum_{m\geq \log_4{(t/2C)}} |I_m|2^{-5(m-1)/2})^{s+3} \leq s^33^s (\sum_{m\geq \log_4{(t/2C)}} C2^{2m}2^{-5(m-1)/2})^{s+3},
\end{align*}

using that only edges in $I_m$ appear for $m \geq \log_4{(t/2C)}$. Note that $\sum_{m\geq \log_4{(t/2S)}} C2^{2m}2^{-5(m-1)/2}$ is at most $50C(2C/t)^{1/4}$, for $t$ sufficiently large. So we can conclude, for all $t$ large enough that $50C(2C/t)^{1/4} \leq 1/4$. So we obtain the desired bound as in the previous theorem.
\end{proof}

\section{The $C_4$-intersection problem}

In this section we prove non-trivial upper bounds for the measure of a cycle-intersecting family. We also prove an upper bound for $4$-AP-intersecting families. To do this we prove a version of Theorem 1.1 in the regime where $C$ is much smaller than 1. The way we use this to obtain upper bounds for the intersection problems is identical to the previous section.
\\
\\
In the proof we will use the following concentration inequality of Freedman \cite{Freedman}.
\begin{theorem*}
Consider a real-valued martingale $\{M_k : k = 0,1,2,\dots\}$.
\[
M_k-M_{k-1} \leq R \quad \text{almost surely for } k = 1,2,3,\dots
\]

Define the following:
\[
A_k := \sum_{j=1}^{k} \mathbb{E}_{j-1}[(M_j-M_{j-1})^2], \quad k = 1,2,3,\dots
\]

Then, for all $t \ge 0$ and $\sigma^2 > 0$,
\[
\mathbb{P}\Big( \exists k \ge 0 : M_k \ge t \text{ and } A_k \le \sigma^2 \Big)
\le \exp\Bigg( - \frac{t^2}{2(\sigma^2 + Rt/3)} \Bigg)
\]
\end{theorem*}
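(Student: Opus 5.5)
We would prove Freedman's inequality by the standard exponential supermartingale method. We assume, as is implicit in the statement, that $M_0 = 0$; otherwise we apply the argument to $M_k - M_0$. We also assume $R > 0$. Write $\Delta_j = M_j - M_{j-1}$, so that $\Delta_j \leq R$ and $\mathbb{E}_{j-1}[\Delta_j] = 0$.

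\textbf{Step 1: a one-step exponential bound.} Fix $\lambda > 0$ and set
$$g(\lambda) = \frac{e^{\lambda R} - 1 - \lambda R}{R^2}.$$
The function $\phi(u) = (e^u - 1 - u)/u^2$, extended by $\phi(0) = 1/2$, is nondecreasing on $\mathbb{R}$. Hence for every $x \leq R$,
$$e^{\lambda x} \leq 1 + \lambda x + g(\lambda) x^2.$$
Taking conditional expectations and using $\mathbb{E}_{j-1}[\Delta_j] = 0$ together with $1 + y \leq e^y$ gives
$$\mathbb{E}_{j-1}\big[e^{\lambda \Delta_j}\big] \leq \exp\big(g(\lambda)\, \mathbb{E}_{j-1}[\Delta_j^2]\big).$$
Since $A_k$ is predictable (it is $\mathcal{F}_{k-1}$-measurable), it follows that
$$Z_k = \exp\big(\lambda M_k - g(\lambda) A_k\big)$$
is a nonnegative supermartingale with $Z_0 = 1$.

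\textbf{Step 2: stopping.} Let $\tau$ be the first $k$ with $M_k \geq t$ and $A_k \leq \sigma^2$. This is a stopping time, because $A_k$ is $\mathcal{F}_{k-1}$-measurable. Optional stopping applied to the bounded times $\tau \wedge N$ gives $\mathbb{E}[Z_{\tau \wedge N}] \leq 1$. On the event $\{\tau \leq N\}$ we have
$$Z_{\tau} \geq \exp\big(\lambda t - g(\lambda)\sigma^2\big).$$
Markov's inequality then yields
$$\mathbb{P}(\tau \leq N) \leq \exp\big(-\lambda t + g(\lambda)\sigma^2\big).$$
Letting $N \to \infty$ by monotone convergence bounds the probability in the statement by the same quantity. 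This step is the only delicate one: we must make sure that $A_k \leq \sigma^2$ is used at the stopping time itself, which is legitimate since $A$ is predictable and evaluated at the same index as $M$. Everything else is calculus.

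\textbf{Step 3: optimising over $\lambda$.} Choose $\lambda = R^{-1}\log(1 + Rt/\sigma^2)$. Writing $u = Rt/\sigma^2$, the exponent becomes Bennett's
$$-\frac{\sigma^2}{R^2}\, h(u), \qquad h(u) = (1+u)\log(1+u) - u.$$
It remains to check the elementary inequality
$$h(u) \geq \frac{u^2}{2(1 + u/3)} \quad \text{for } u \geq 0.$$
One way is to verify that the difference vanishes to second order at $0$ and has nonnegative second derivative. Concretely, $h''(u) = 1/(1+u)$, while the second derivative of the right-hand side is $27/(3+u)^3$, and $(3+u)^3 \geq 27(1+u)$ for $u \geq 0$. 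Substituting $u = Rt/\sigma^2$ gives
$$\frac{\sigma^2}{R^2}\, h(u) \geq \frac{t^2}{2(\sigma^2 + Rt/3)},$$
which is exactly the claimed bound.
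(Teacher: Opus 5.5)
The paper does not prove this statement. It is quoted from Freedman \cite{Freedman} and used as a black box in Case I of Theorem 3.1, so there is no internal proof to compare against. Your argument is the standard exponential-supermartingale proof, essentially Freedman's own via Bennett's bound, and it is correct. I checked each step:
\begin{itemize}
\item the one-step bound $e^{\lambda x}\le 1+\lambda x+g(\lambda)x^2$ for $x\le R$;
\item the supermartingale property of $Z_k=\exp(\lambda M_k-g(\lambda)A_k)$, where predictability of $A_k$ is what lets you pull $e^{-g(\lambda)A_k}$ out of $\mathbb{E}_{k-1}$;
\item optional stopping at the first index where $M_k\ge t$ and $A_k\le\sigma^2$ hold simultaneously;
\item the choice $\lambda=R^{-1}\log(1+Rt/\sigma^2)$, which yields Bennett's exponent $-\frac{\sigma^2}{R^2}h(u)$;
\item the final calculus: $\frac{3u^2}{2(3+u)}$ has second derivative $27/(3+u)^3$, both sides and their first derivatives vanish at $u=0$, and $(3+u)^3\ge 27(1+u)$.
\end{itemize}

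A few details are worth recording.
\begin{itemize}
\item The normalisation $M_0=0$ is genuinely necessary. As quoted, $k=0$ is allowed and $A_0=0$, so the inequality is false whenever $M_0\ge t$. Your reduction to $M_k-M_0$ is the right reading.
\item The case $t=0$ is trivial; your $\lambda$ would be $0$ there, so it should be excluded.
\item Assuming $R>0$ costs nothing. $R<0$ is incompatible with mean-zero increments, and $R=0$ forces $M\equiv 0$.
\item The monotonicity of $\phi$, which you assert without proof, follows at once from $\phi(u)=\int_0^1(1-s)e^{su}\,ds$ (Taylor's formula with integral remainder).
\item Conditional second moments may be infinite. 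The convention $e^{-\infty}=0$ keeps $Z_k$ a nonnegative supermartingale in that case.
\item $\tau$ is a stopping time simply because $A_k$ is adapted; predictability is not needed for that step.
\end{itemize}

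When the paper applies the inequality in Case I, it writes the realised sum $\sum_i (M_{i+1}-M_i)^2$ where the predictable $A_n$ should appear. Your version is the correct one. Since $\mathbb{E}A_n$ equals the expected realised sum, the subsequent Markov step in the paper goes through unchanged with $A_n$.
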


In this section we do not specify constants as they are small enough that there is no interest in what exactly we have obtained. We have a slightly strange statement and choice of constants in Theorem 3.1 as it is tailored to prove a non-trivial upper bound for the intersection problems.

\begin{theorem}
There exists an $\epsilon > 0$ and a $K > 0$ such that following holds: for every family $\mathcal{A} \subset \{0,1\}^n$ with $\mu(\mathcal{A}) \geq \frac{1}{2} - \epsilon$, for at least a $\frac{1}{2} + \epsilon$ proportion of elements $x \in \{0,1\}^n$ we have a $P \in [0,1]^n$ such that $\mathbb{P}(x+Y_P \in \mathcal{A}) > (2K)^\frac{3}{2}/(1-(2K)^\frac{1}{2})$ and $\norm{P}_2^2 \leq K$.
\end{theorem}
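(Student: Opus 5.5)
The plan is to rerun the revealing-coordinates process from the proof of Theorem 1.1, but in the regime where the bias parameter $C$ is a small constant. Write $\tau=(2K)^{3/2}/(1-(2K)^{1/2})$. Points $x\in\mathcal{A}$ are handled by $P=0$, and there are at least $\frac12-\epsilon$ of them. However, this is not the whole argument. A vertex-boundary count does not suffice: any $x\notin\mathcal{A}$ with a neighbour in $\mathcal{A}$ works (put $P_i=\sqrt K$ on that coordinate), but by Harper's inequality such $x$ can form only an $O(n^{-1/2})$ proportion. So we need the martingale argument, tuned so that it gains a constant over the trivial $\frac12$.

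As in Theorem 1.1, we reveal $X_1,\dots,X_n$ and define $W_k,O_{k+1},Z_{k+1}$ in the same way. We set $P_{k+1}=\min(CO_{k+1},1)$ if $X_{k+1}\neq Z_{k+1}$ and $P_{k+1}=0$ otherwise, with one change: we freeze the process (all later $P_i=0$) as soon as $\sum_{i\le k+1}P_i^2$ would exceed $K$. Then $\norm{P}_2^2\le K$ holds deterministically. We decompose $W_k=\mu(\mathcal{A})+M_k+B_k$, where $B_k=\sum_{i\le k}\mathbb{E}_{i-1}D_i=\sum_{i\le k}CO_i^2$ (while unfrozen and $P<1$) is the upward drift and $M_k$ is a martingale. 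The increments of $M$ are bounded by $2O_{k+1}\le 1$, and its conditional variance is about $O_{k+1}^2$. So its quadratic variation $A_k\approx\sum O_i^2$ satisfies $B_k\approx C\,A_k$, and the drift is exactly proportional to the quadratic variation, as for Brownian motion with drift $C$. While unfrozen, $\sum P_i^2\approx C^2\sum_{i:X_i\ne Z_i}O_i^2$, which is about $C^2A_n/2$. Since $\mathbb{E}A_n\lesssim \mathbb{E}W_n^2\le 1$, freezing happens only when $A$ is of order $K/C^2$. We therefore choose $C$ with $C^2\ll K$ but $C$ still a fixed constant.

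The key step compares the end value $W_n$ with that of a driftless martingale. Without drift, $W_n=1_{\mathcal{A}}(X)$, so the mass above $\tau$ is exactly $\mu(\mathcal{A})\ge\frac12-\epsilon$. With drift, the event $\{W_n\le\tau\}$ requires $M$ to fall by about $\frac12+B_n-\tau$ while $A_n$ is bounded. Conversely, paths that go up accumulate $B_n\approx CA_n$. I would split by the size of $A_n$. On $\{A_n\le\sigma^2\}$, apply Freedman's inequality to $-M$ with $t\approx \frac12-\tau$ to show that falling to $\tau$ while accumulating little variance has probability at most about $\exp(-c/\sigma^2)$. On $\{A_n>\sigma^2\}$ the drift has contributed at least $C\sigma^2$, which shifts an interval of $W$ of length of order $C\sigma^2$ from below $\tau$ to above it. Here the threshold $\tau\approx(2K)^{3/2}$ is used: a single step with $P_{k+1}$ of size about $\sqrt{2K}$ lifts a value near $0$ only by $O(P\cdot O)$, and the choice of $\tau$ makes the freezing rule compatible with this. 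Optimising $\sigma$ against $C$ should yield $\mathbb{P}(W_n>\tau)\ge\mu(\mathcal{A})+c(C,K)$ for some $c(C,K)>0$ independent of $n$. Taking $\epsilon<c/2$ then gives the $\frac12+\epsilon$ proportion, since $W_n=\mathbb{P}(x+Y_P\in\mathcal{A})$.

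The main obstacle is this last comparison. We must show that the drift gains a constant amount of probability mass above $\tau$, rather than merely raising $\mathbb{E}W_n$, while the losses are controlled: paths that freeze early, steps where $P_{k+1}$ is capped at $1$, and the $\epsilon$ deficit in $\mu(\mathcal{A})$. I would first try an exponential supermartingale as in Lemma 1.3, of the form $e^{-\lambda(W_k-\tau)}$ with $\lambda$ of order $C$. I would use Freedman only to bound the probability of freezing before $W$ leaves a neighbourhood of $\tau$.
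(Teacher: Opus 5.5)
\textbf{There is a genuine gap.} It is the step you yourself flag as the main obstacle, and the route you sketch for it does not work.

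\textbf{Why your sketch fails.} On $\{A_n>\sigma^2\}$ you know $B_n\gtrsim C\sigma^2$, but this says nothing about whether $W_n>\tau$. The variable $\mu(\mathcal{A})+M_n=W_n-B_n$ is not distributed like $1_{\mathcal{A}}(X)$. Once $P\neq 0$ you are running a different process, with no coupling to the driftless martingale. So ``shifting an interval from below $\tau$ to above it'' has no content.

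The exponential-supermartingale fallback also fails in this regime. With $\lambda\asymp C$, Markov gives only $\mathbb{P}(W_n\le\tau)\le e^{-\lambda(\mu(\mathcal{A})-\tau)}=1-\Theta(C)$. And $C$ must be small because $K$ must be.

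Two smaller slips:
\begin{itemize}
\item Freedman with increments as large as $\frac12$ cannot give $\exp(-c/\sigma^2)$: its bound tends to $e^{-3t/(2R)}$ as $\sigma\to0$. You would need Azuma--Hoeffding with the predictable ranges $2O_{k+1}$.
\item Harper's inequality is a lower bound on the vertex boundary. It is Hamming balls, not Harper, that show the boundary can be as small as $O(n^{-1/2})$.
\end{itemize}

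\textbf{The missing idea.} Merely raising $\mathbb{E}W_n$ is enough, provided you split on the deterministic quantity $D=\mathbb{E}B_n=C\,\mathbb{E}\sum_i O_i^2$ rather than pathwise on $A_n$. Use the unfrozen process, with $P_{k+1}=CO_{k+1}$ on $\{X_{k+1}\neq Z_{k+1}\}$. Then $\mathbb{E}\norm{P}_2^2=CD/2$, so Markov plus a union bound replaces your freezing rule. Fix a small absolute constant $c_0$.

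\emph{Case $D\ge c_0C$.} Since $W_n\le1$ and $\mathbb{E}W_n=\mu(\mathcal{A})+D$, you get $\mathbb{P}(W_n\le\tau)\le(1-\mu(\mathcal{A})-D)/(1-\tau)$. Also $\mathbb{P}(\norm{P}_2^2>K)\le CD/(2K)=D/200$ when $K=100C$. Because $\tau\asymp K^{3/2}=o(C)$, the gain $D$ beats both losses. Hence the good set has measure at least $\frac12+\epsilon$ once $\epsilon\ll C$.

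This, not anything about single-step lifts, is why $\tau$ has order $K^{3/2}$. The threshold must be $o(C)$ while $K$ is a constant multiple of $C$, and $K^{3/2}$ with $K=100C$ satisfies both.

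\emph{Case $D<c_0C$.} Here $\mathbb{E}A_n\le D/C<c_0$. Apply Freedman to $-M_k$ (increments at most $\frac12$) together with Markov on $A_n$. This shows $W_n\ge\mu(\mathcal{A})+M_n>\frac1{10}>\tau$ with probability at least $\frac12$ plus an absolute constant. The loss from $\{\norm{P}_2^2>K\}$ is negligible.

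This two-case argument is the paper's proof.
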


\begin{proof}
We will dispense with the suffix $X$ notation since we will not use the notation needed for probability with respect to $Y_P$ as $\mathbb{P}_P(x+Y_P) \in \mathcal{A})$ is just $W_n$. Consider the same submartingale considered in Theorem 1.1 with a given $C$, taken to be sufficiently small. Let $K = 100C$ and let $\epsilon$ be sufficiently small in terms of $C$. It will now be useful to consider the martingale $M_i = W_i - CO_i^2$ (we take $C < 1$ so $P_{i} < O_i \leq 1$ for all $i$). We will also let $D = \mathbb{E}(\sum_{i=1}^{n}{2O_iP_i}) = \mathbb{E}(\sum_{i=1}^{n}{CO_i^2})$, i.e. the expected drift upwards. Note that $|M_{i+1}-M_i|$ is always at most $\frac{1}{2}$, and that it is upper bounded by $O_i$ so we have that the expected value of the sum of squared differences, $\mathbb{E}(\sum_{i=0}^{n} (M_{i+1}-M_i)^2)$, is at most $D/C$. Also recall that $\mathbb{E}(\sum_{i=1}^{n} P_i^2) = DC/2$. We prove our result by considering two cases depending on the size of $D$. If $D$ is small then $\mathbb{E}(\sum_{i=0}^{n} (M_{i+1}-M_i)^2)$ is small so we can show that $M_n$ cannot deviate from $M_0$ by too much. If $D$ is large then we have that $\mathbb{E}(W_n)$ is large enough that we can crudely bound $\mathbb{P}(W_n \leq (2K)^\frac{3}{2}/(1-(2K)^\frac{1}{2}))$.
\vspace{1em}

\noindent\underline{Case I: $D < 0.01C$}
\vspace{0.6em}
\\
Here we have  $\mathbb{E}(\sum_{i=0}^{n} (M_{i+1}-M_i)^2) < 0.01$ and $|M_{i+1}-M_{i}|$ is bounded by $\frac{1}{2}$ so we apply our result of Freedman with the martingale $-M_i + M_0$ to obtain that for $\epsilon$ sufficiently small:

$$\mathbb{P}(M_n \leq 1/10 \text{ } and \text{ } \sum_{i=0}^{n} (M_{i+1}-M_i)^2 \leq 0.1) \leq e^{-(4/5 -\epsilon)^2/(0.2+4/15)} \leq 1/3$$ 

Then we have, by Markov's inequality applied to $\sum_{i=0}^{n} (M_{i+1}-M_i)^2$, that  $\mathbb{P}(M_n \leq 1/10) \leq 13/30$. Having taken $C$ sufficiently small and using that $W_n \geq M_n$ as $\mathbb{E}(\sum_{i=0}^{n}P_i^2) = DC/2$ we can simply apply Markov's inequality and a union bound as in Theorem 1.1 to conclude.
\vspace{1em}

\noindent\underline{Case II: $D \geq 0.01C$}
\vspace{0.6em}
\\
Let $(2K)^\frac{3}{2}/(1-(2K)^\frac{1}{2}) = K'$. Here we have that $\mathbb{E}(W_n) = \frac{1}{2}-\epsilon + D$ so $\mathbb{P}(W_n \leq K')K'+ 1-\mathbb{P}(W_n \leq K') \geq \frac{1}{2}-\epsilon + D$. That implies that $\mathbb{P}(W_n \leq K') \leq \frac{\frac{1}{2}+\epsilon-D}{1-K'}$. We have that $\mathbb{P}(\sum_{i=1}^{n} P_i^2 > K) < CD/2K$ by Markov's inequality. So we have that the probability that $\{\sum_{i=1}^{n} P_i^2 > K\}$ or $\{W_n \leq K'\}$ is less than:
$$
CD/2K + \frac{\frac{1}{2}+\epsilon-D}{1-K'}
$$
With our choice of $K$ this equals:
$$
\frac{D}{200} + \frac{\frac{1}{2}+\epsilon-D}{1-K'}
$$
\\
For $C$ sufficiently small we have that $\frac{1}{1-K'} \leq 1 + D/100$. So the above expression is at most:
$$
\frac{1}{2}+\epsilon - \frac{D}{4}
$$
Using that $D \geq 0.01C$, for $\epsilon$ sufficiently small in terms of $C$, this is at most $\frac{1}{2}-\epsilon$  so we may conclude.
\end{proof}

We now will easily deduce that every cycle-intersecting family of graphs is of measure at most $\frac{1}{2} - \epsilon$.

\begin{theorem}
There exists an $\epsilon > 0$ such that every cycle-intersecting family of graphs $\mathcal{A}$ has $\mu(\mathcal{A}) \leq \frac{1}{2} - \epsilon$.
\end{theorem}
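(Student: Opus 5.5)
Throughout, $\mathcal{A}^c$ means the family of complements $\{x^c : x\in\mathcal{A}\}$, as in Section 2. I would follow the strategy of Section 2, with Theorem 3.1 in place of Corollary 1.2. Work in $\{0,1\}^N$ with $N=\binom{n}{2}$. Let $\mathcal{C}$ be the up-set of graphs containing a cycle, so $\mathcal{C}^c$ is the family of forests. If $\mathcal{A}$ is cycle-intersecting, the argument at the start of Section 2 shows that $\mathcal{A}^c+\mathcal{C}^c$ is disjoint from $\mathcal{A}$. Take $\epsilon,K$ from Theorem 3.1 and suppose, for contradiction, that $\mu(\mathcal{A})>\frac12-\epsilon$. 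Complementation is a bijection, so $\mu(\mathcal{A}^c)=\mu(\mathcal{A})\ge\frac12-\epsilon$, and Theorem 3.1 applies to $\mathcal{A}^c$. It gives at least a $\frac12+\epsilon$ proportion of $x$ with some $P$ satisfying $\norm{P}_2^2\le K$ and $\mathbb{P}(x+Y_P\in\mathcal{A}^c)>K'$, where $K'=(2K)^{3/2}/(1-(2K)^{1/2})$.

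The key step, and the reason for the strange constant $K'$, is to show that for every $P\in[0,1]^N$ with $\norm{P}_2^2\le K$ we have $\mathbb{P}(Y_P\text{ contains a cycle})\le K'$. I would prove this spectrally. Let $M$ be the symmetric $n\times n$ matrix with $M_{uv}=P_{uv}$ and zero diagonal. A union bound over cycles gives
$$\mathbb{P}(Y_P\text{ has a cycle})\le\sum_{\ell\ge3}\sum_{\text{$\ell$-cycles } Z}\prod_{e\in Z}P_e\le\sum_{\ell\ge3}\frac{\mathrm{tr}(M^\ell)}{2\ell},$$
since each $\ell$-cycle corresponds to $2\ell$ closed walks and all other closed walks contribute nonnegatively. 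If $\lambda_i$ are the eigenvalues of $M$, then
$$\mathrm{tr}(M^\ell)\le\sum_i|\lambda_i|^\ell\le\Big(\sum_i\lambda_i^2\Big)^{\ell/2}=\norm{M}_F^{\ell}=(2\norm{P}_2^2)^{\ell/2}\le(2K)^{\ell/2}.$$
Summing the geometric series (dropping the harmless factor $1/(2\ell)$) gives at most $(2K)^{3/2}/(1-(2K)^{1/2})=K'$, provided $2K<1$. This holds since $K=100C$ with $C$ small.

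Now combine the two facts. For each good $x$ we have $\mathbb{P}(x+Y_P\in\mathcal{A}^c)>K'\ge\mathbb{P}(Y_P\notin\mathcal{C}^c)$, so some outcome $Y$ is a forest with $x+Y\in\mathcal{A}^c$. Hence $x\in\mathcal{A}^c+\mathcal{C}^c$, and therefore $\mu(\mathcal{A}^c+\mathcal{C}^c)\ge\frac12+\epsilon$. Since $\mathcal{A}^c+\mathcal{C}^c$ is disjoint from $\mathcal{A}$, this gives $\mu(\mathcal{A})+\frac12+\epsilon\le1$, contradicting $\mu(\mathcal{A})>\frac12-\epsilon$. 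So $\mu(\mathcal{A})\le\frac12-\epsilon$.

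The main obstacle is the cycle-probability bound. Any crude count of cycles through high-probability edges loses factors depending on $n$; the trace/Frobenius-norm bound avoids this and matches exactly the threshold $K'$ built into Theorem 3.1. Everything else is bookkeeping: the strict inequalities line up, and applying Theorem 3.1 to the complement family is legitimate.
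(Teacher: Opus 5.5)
Your proof is correct. Its overall architecture is the paper's, but you prove the cycle-count bound by a different method.

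On the reduction, the two arguments are the same pigeonhole in dual form. The paper applies Theorem 3.1 to $\mathcal{A}$ itself and uses $\mu(\mathcal{A}^c)>\frac12-\epsilon$ to find a good point of the form $x^c$ with $x\in\mathcal{A}$. You apply Theorem 3.1 to $\mathcal{A}^c$ and use the Section 2 fact that $\mathcal{A}^c+\mathcal{C}^c$ is disjoint from $\mathcal{A}$.

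The genuine difference is the bound on the probability that $Y_P$ contains a cycle. The paper introduces vertex weights $\alpha_v=\sum_{e\ni v}P_e^2$, with $\sum_v\alpha_v\le 2K$. It then applies Cauchy--Schwarz repeatedly to the sum over ordered vertex tuples, summing out the odd-indexed vertices, and needs an extra Cauchy--Schwarz step to close off odd cycles. You instead bound the weighted cycle count by $\mathrm{tr}(M^\ell)/(2\ell)$ and use $\sum_i|\lambda_i|^\ell\le(\sum_i\lambda_i^2)^{\ell/2}=\norm{M}_F^\ell=(2\norm{P}_2^2)^{\ell/2}$. Both give exactly $(2K)^{\ell/2}$ per cycle length, hence the same threshold $K'$.

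Your route is shorter and treats odd and even lengths uniformly. It also makes the $2\ell$-fold overcounting explicit; neither argument needs that saving. The paper's route is purely elementary, and it is the same Cauchy--Schwarz technique the paper reuses for $4$-APs in Theorem 3.3. There, no adjacency matrix has traces that count the relevant configurations, so the spectral shortcut does not transfer directly.

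Both arguments rely on $2K<1$. As you note, this comes from the proof of Theorem 3.1 ($K=100C$ with $C$ small), not from its statement.
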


\begin{proof}
Let $\epsilon$ and $K$ be as in Theorem 1.1. Then we have that if $\mu(\mathcal{A}) > \frac{1}{2} -\epsilon$, then there exists an $x \in \mathcal{A}$, and a $P \in [0,1]^{n(n-1)/2}$ with $\mathbb{P}(x^c+Y_P \in \mathcal{A}) > \frac{(2K)^{3/2}}{1-(2K)^{1/2}}$ and $\norm{P}_2^2 \leq K$. So if we show that $Y_P$ contains a cycle with probability at most $\frac{(2K)^{3/2}}{1-(2K)^{1/2}}$ we have that there exists a cycle-free graph $w$ and $x \in \mathcal{A}$ such that $x^c \cup w \in \mathcal{A}$, a contradiction.
\\
\\
So we bound the expected number of cycles in $Y_P$. For each vertex $v$, let $\alpha_v$ be the sum of $P_e^2$ over all edges incident to $v$. So $\sum_{v} \alpha_v \leq 2K$. Then the expected number of $t$-cycles in $Y_P$ is at most:
$$
\sum_{v_1,..,v_t}\prod_{i=1}^{t} P_{v_iv_{i+1}},
$$
taking indices modulo $k$. We bound this by repeatedly applying Cauchy-Schwarz on the sum, for all odd indices excluding $t$ if it is odd. We obtain, in the case that $t$ is even:
$$
\sum_{v_2,v_4,...,v_{2\lfloor\frac{t}{2}\rfloor}} \prod_{i=1}^{i=\lfloor\frac{t}{2}\rfloor} \alpha_{v_{2i}}
$$
This is then at most $(2K)^{t/2}$.
\\
\\
In the case that $t$ is odd we obtain:
$$
\sum_{v_2,..,v_{t-1}, v_t}\alpha_{v_{t-1}}^{1/2}\alpha_{v_t}^{1/2}P_{v_{t-1}v_t}\prod_{i=1}^{i=\lfloor\frac{t}{2}\rfloor-1} \alpha_{v_{2i}}
$$
Then applying Cauchy-Schwarz to the sum over $v_t$ we obtain at most:
$$
\sum_{v_2,..,v_{t-1}}\alpha_{v_{t-1}}(2K)^{1/2}\prod_{i=1}^{i=\lfloor\frac{t}{2}\rfloor-1} \alpha_{v_{2i}},
$$
which in turn is at most $(2K)^{t/2}$. So, summing over $t \geq 3$, we obtain at most $\frac{(2K)^{3/2}}{1-(2K)^{1/2}}$.
\end{proof}

Lastly, we prove a non-trivial upper bound for the $4$-AP-intersection problem.

\begin{theorem}
There exists an $\epsilon > 0$ such that every $4$-AP-intersecting family $\mathcal{A}$ has $\mu(\mathcal{A}) \leq \frac{1}{2} - \epsilon$.
\end{theorem}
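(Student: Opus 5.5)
The proof follows the template of Theorem 3.2, with Theorem 3.1 in place of Theorem 1.1. Take $\epsilon$ and $K$ from Theorem 3.1, and suppose for contradiction that $\mathcal{A}$ is $4$-AP-intersecting with $\mu(\mathcal{A}) > \frac{1}{2}-\epsilon$.

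First we show that $\mathcal{A}^c=\{x^c \mid x\in\mathcal{A}\}$ meets the good set. Theorem 3.1 gives at least a $\frac{1}{2}+\epsilon$ proportion of $z\in\{0,1\}^n$ for which there is a $P$ with $\norm{P}_2^2\le K$ and $\mathbb{P}(z+Y_P\in\mathcal{A})>K'$, where $K'=(2K)^{3/2}/(1-(2K)^{1/2})$. The family $\mathcal{A}^c$ has measure greater than $\frac{1}{2}-\epsilon$. Hence the good $z$ and $\mathcal{A}^c$ together have total measure exceeding $1$, so some good $z$ equals $x^c$ with $x\in\mathcal{A}$.

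Next we use the intersection property. As in the discussion opening Section 2, if $w$ is $4$-AP-free and $x^c+w\in\mathcal{A}$, then $x\cap(x^c+w)\subset w$ contains no $4$-AP, which is a contradiction. So it suffices to prove the following: for every $P\in[0,1]^n$ with $\norm{P}_2^2\le K$,
\[
\mathbb{P}(Y_P \text{ contains a } 4\text{-AP}) \le \mathbb{E}(\#\,4\text{-APs in } Y_P) \le K'.
\]
Then some realisation of $Y_P$ is simultaneously $4$-AP-free and lands $x^c+Y_P$ in $\mathcal{A}$. If the bound we actually get has a different form, say $cK^2$, it is enough that it tends to $0$ faster than $K'\asymp K^{3/2}$ as $K\to 0$. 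Theorem 3.1 holds with $K$ as small as we like, so shrinking $K$ (and then $\epsilon$) repairs any such mismatch.

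The main step is bounding the expected number of $4$-APs,
\[
\sum_{a,\ d\neq 0} P_aP_{a+d}P_{a+2d}P_{a+3d},
\]
summing over progressions inside $[n]$. Write $f=P$ extended by zero. We pair the terms with Cauchy--Schwarz:
\[
\sum_{a,d} f(a)f(a+d)\,f(a+2d)f(a+3d) \le \Big(\sum_{a,d} f(a)^2f(a+3d)^2\Big)^{1/2}\Big(\sum_{a,d} f(a+d)^2f(a+2d)^2\Big)^{1/2}.
\]
For the first factor, the map $(a,d)\mapsto(a,a+3d)$ is injective, so it is at most $(\sum_i f(i)^2)^2\le K^2$. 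The same argument with $(a,d)\mapsto(a+d,a+2d)$ bounds the second factor by $K^2$. Each progression is counted twice (once for each orientation), so the expected number of $4$-APs is at most $K^2$. For $K$ small, $K^2\le (2K)^{3/2}\le K'$, which completes the contradiction.

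The main obstacle is exactly this $\ell_2$ bound. For graphs, Theorem 3.2 needed the vertex-sum trick; here the point is that two "spread-out" pairs of a $4$-AP determine the whole progression injectively, which makes the estimate straightforward. This is also where $3$-APs fail: one pair cannot control three terms in $\ell_2$.
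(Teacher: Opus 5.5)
Your proof is correct and follows the paper's route. You apply Theorem 3.1 to get $x\in\mathcal{A}$ with $x^c$ good, use the intersection property, and bound the expected number of $4$-APs in $Y_P$ by $K^2\le K'$ via Cauchy--Schwarz. The only difference is cosmetic: you pair the outer terms $(a,a+3d)$ with the inner terms $(a+d,a+2d)$ in a single Cauchy--Schwarz, using injectivity of both maps, whereas the paper pairs $(a,a+d)$ with $(a+2d,a+3d)$ and applies Cauchy--Schwarz twice, first in $d$ and then in $a$.
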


\begin{proof}
Our proof is very similar to the previous proof so we only need to show that the expected number of $4$-APs in $Y_P$ is at most $\frac{(2K)^{3/2}}{1-(2K)^{1/2}}$. The expected number is equal to:
$$
\sum_{a,d} P_aP_{a+d}P_{a+2d}P_{a+3d},
$$
which by Cauchy-Schwarz is at most:
$$
\sum_{a} (\sum_{d}  P_a^2P_{a+d}^2)^{1/2}(\sum_{d}  P_{a+2d}^2P_{a+3d}^2)^{1/2},
$$
which in turn is at most:
$$
K^{1/2} \sum_{a} P_a (\sum_{d} P_{a+2d}^2P_{a+3d}^2)^{1/2},
$$
since $\sum_{d}P_{a+d}^2 \leq K$. Again using Cauchy-Schwarz we obtain:
$$
K(\sum_{a,d}P_{a+2d}^2P_{a+3d}^2)^{1/2},
$$
which is simply at most $K^2$. Then note that $K \leq 1$, so $K^2 \leq \frac{(2K)^{3/2}}{1-(2K)^{1/2}}$ and we may conclude.
\end{proof}

\section{Concluding Remarks and Further Research}
Our results have provided non-trivial upper bounds for the two classes of intersection problems we have considered. Returning to the problem posed by Alon to determine which graphs $H$ have $m(H) = \frac{1}{2}$, the graphs that our methods do not resolve are trees that are not a disjoint collection of stars. Similarly our methods do not prove any non-trivial upper bound for the $3$-AP intersection problem. There is good reason for this. Recall that for any $P= (C/\sqrt{n},...,C/\sqrt{n})$ we have that $Y_P$ contains no $3$-APs with probability tending to $0$, whereas in all our proofs of non-trivial upper bounds it is crucial that this probability is bounded below by a positive constant independent of $n$. The same applies to the $H$-intersection problem when $H$ is a tree. We believe these problems to be the most natural direction for further research. One could also consider the related problem of proving a version of Corollary 1.3 when we only have a $f(\norm{P}_{\infty})$ probability of $Y_P$ being in $S$ where $\lim_{x \to 0} f(x) = 0$.
\\
\\
Another direction we think is interesting would be trying to better understand the asymptotic behaviour of $m(K_{t,t})$ (and the analogous problem for $k$-APs). Our result shows it tends to zero polynomially fast, and again this is a natural barrier for any approach using Theorem 1.1. Take the arithmetic progression problem for example: we must have $\norm{P}_2^2 \leq k$ or else we could have $Y_P$ certainly containing a $k$-AP. The dependency between $\epsilon$, $\delta$ and the upper bound for $\norm{P}_2^2$ is such that this only implies a polynomial upper bound. So we ask the following natural pair of questions:

\begin{question}
Does there exist $c < 1$ such that the following holds: if $\mathcal{A}$ is a $k$-AP intersecting family then $\mu(A) < c^k$?
\end{question}

\begin{question}
Does there exist $c<1$ such that $m(K_{t,t}) < c^{t^2}$ for all $t$?
\end{question}

We do not know of an example that would rule out proving these results by improving the dependency between $\epsilon$, $\delta$ and the upper bound for $\norm{P}_2^2$ in Theorem 1.1.

\section*{Acknowledgements}
The author would like to thank Timothy Gowers for useful comments on an earlier draft of this paper.

\bibliographystyle{amsplain}
\bibliography{IntersectionProblems}
\end{document}